\documentclass[11pt]{amsart}
\usepackage[margin=37.5mm]{geometry}
\usepackage{amsmath,amssymb}
\usepackage{amsthm}

\newtheorem{thm}{Theorem}[section]

\newtheorem{lem}{Lemma}[section]
\newtheorem{cor}{Corollary}[section]
\newtheorem{defi}{Definition}[section]
\newtheorem{ex}{Example}[section]
\newtheorem{rem}{Remark}[section]

\begin{document}

\title{Generic and dense distributional chaos with shadowing}
\author{Noriaki Kawaguchi$^\ast$}
\thanks{$^\ast$JSPS Research Fellow}
\subjclass[2010]{74H65; 37C50}
\keywords{generic chaos; dense chaos; distributional chaos; shadowing property}
\address{Faculty of Science and Technology, Keio University, 3-14-1 Hiyoshi, Kohoku-ku, Yokohama, Kanagawa 223-8522, Japan}
\email{gknoriaki@gmail.com}

\begin{abstract}
For continuous self-maps of compact metric spaces, we consider the notions of generic and dense chaos introduced by Lasota and Snoha and their variations for the  distributional chaos, under the assumption of shadowing. We give some equivalent properties to generic uniform (distributional) chaos in terms of chains and prove their equivalence to generic (distributional) chaos under the shadowing. Several examples illustrating the main results are also given.
\end{abstract}

\maketitle
\markboth{NORIAKI KAWAGUCHI}{GENERIC AND DENSE DISTRIBUTIONAL CHAOS WITH SHADOWING}

\section{Introduction}

Shadowing is a feature of topologically hyperbolic dynamical systems, and its implications for chaos is a subject of ongoing research \cite{AC,K2,K3,LLT,OW} (see \cite{AH,P} for a general background). One of the definitions of chaos is the generic chaos proposed by Lasota (see \cite{Pi}). It means the genericity of Li-Yorke pairs and so concerns the notion of Li-Yorke chaos. Inspired by Lasota's definition, in \cite{S1}, Snoha introduced its variations: the generic $\delta$-chaos, dense $\delta$-chaos, and dense chaos, here $\delta>0$. Another definition derived from Li-Yorke chaos is the distributional chaos by Schweizer and Sm\'ital \cite{SS}, and it has natural analogs of generic and dense chaos. In this paper, we consider these notions of chaos for general topological dynamical systems with the shadowing property.

We begin with some definitions concerning the shadowing property. Throughout, $X$ denotes a compact metric space endowed with a metric $d$. 

\begin{defi}
\normalfont
Given a continuous map $f\colon X\to X$, a finite sequence $(x_i)_{i=0}^{k}$ of points in $X$, where $k$ is a positive integer, is called a {\em $\delta$-chain} of $f$ if $d(f(x_i),x_{i+1})\le\delta$ for every $0\le i\le k-1$. Let $\xi=(x_i)_{i\ge0}$ be a sequence of points in $X$. For $\delta>0$, $\xi$ is called a {\em $\delta$-pseudo orbit} of $f$ if $d(f(x_i),x_{i+1})\le\delta$ for all $i\ge0$. For $\epsilon>0$, $\xi$ is said to be {\em $\epsilon$-shadowed} by $x\in X$ if $d(f^i(x),x_i)\leq \epsilon$ for all $i\ge 0$. We say that $f$ has the {\em shadowing property} if for any $\epsilon>0$, there is $\delta>0$ such that every $\delta$-pseudo orbit $\xi$ of $f$ is $\epsilon$-shadowed by some $x\in X$.
\end{defi}

Then, we give the definitions of generic and dense (distributional) chaos (see, for example, \cite{OZ} for a background).

\begin{defi}
\normalfont
Let $f\colon X\to X$ be a continuous map. A pair $(x,y)\in X^2$ is said to be {\em $\delta$-scrambled} for $\delta>0$ if

\[
\limsup_{i\to\infty}d(f^i(x),f^i(y))\ge\delta\quad\text{and}\quad\liminf_{i\to\infty}d(f^i(x),f^i(y))=0. \tag{C}
\]

For $(x,y)\in X^2$ and $t>0$, we define
\[
F_{xy}(t)=\liminf_{n\to\infty}\frac{1}{n}|\{0\le i\le n-1\colon d(f^i(x),f^i(y))<t\}|
\]
and
\[
F^\ast_{xy}(t)=\limsup_{n\to\infty}\frac{1}{n}|\{0\le i\le n-1\colon d(f^i(x),f^i(y))<t\}|.
\]
A pair $(x,y)\in X^2$ is said to be  {\em DC1-$\delta$-scrambled} (resp. {\em DC2-$\delta$-scrambled}) for $\delta>0$ if
\[
F_{xy}(\delta)=0\quad\text{and}\quad F^\ast_{xy}(t)=1\quad\text{for all}\quad t>0, \tag{DC1}
\] 
\[
F_{xy}(\delta)<1\quad\text{and}\quad F^\ast_{xy}(t)=1\quad\text{for all}\quad t>0. \tag{DC2}
\]

For $\delta>0$, we denote by ${\rm C}_\delta(X,f)$ (resp. ${\rm DC1}_\delta(X,f)$, ${\rm DC2}_\delta(X,f)$) the sets of $\delta$-scrambled (resp. DC1-$\delta$-scrambled, DC2-$\delta$-scrambled) pairs for $f$. By definitions, we have
\[
{\rm DC1}_\delta(X,f)\subset {\rm DC2}_\delta(X,f)\subset {\rm C}_\delta(X,f).
\]

For $\Sigma\in\{{\rm C},{\rm DC1},{\rm DC2}\}$, let $\Sigma(X,f)=\bigcup_{\delta>0}\Sigma_\delta(X,f)$. Then, we say that $f$ exhibits
\begin{equation*}
\begin{aligned}
gu\Sigma&\quad\text{if}\quad\text{$\Sigma_\delta(X,f)$ is a residual subset of $X^2$ for some $\delta>0$}, \\
du\Sigma&\quad\text{if}\quad\text{$\Sigma_\delta(X,f)$ is a dense subset of $X^2$ for some $\delta>0$}, \\
g\Sigma&\quad\text{if}\quad\text{$\Sigma(X,f)$ is a residual subset of $X^2$}, \\
d\Sigma&\quad\text{if}\quad\text{$\Sigma(X,f)$ is a dense subset of $X^2$}.
\end{aligned}
\end{equation*}
\end{defi}

\begin{rem}
\normalfont
For every $\sigma\in\{gu,du,g,d\}$, 
\[
\sigma{\rm DC1}\Rightarrow\sigma{\rm DC2}\Rightarrow\sigma{\rm C}.
\]
Also, for each $\Sigma\in\{{\rm C},{\rm DC1},{\rm DC2}\}$, $gu\Sigma$ implies $du\Sigma$ and $g\Sigma$; and $g\Sigma$ implies $d\Sigma$.
\end{rem}

Here, we prove a basic lemma.

\begin{lem}
For any continuous map $f\colon X\to X$ and $\Sigma\in\{{\rm C}, {\rm DC1}\}$, $du\Sigma$ implies $gu\Sigma$.
\end{lem}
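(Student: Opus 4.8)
The plan is to realize the sets of scrambled pairs in question — or, for ${\rm DC1}$, slightly shrunken versions of them — as countable intersections of open subsets of $X^2$. Once this is done, density upgrades to residuality for free: $X^2$ is a compact metric space, a dense $G_\delta$ subset of it is residual, and every superset of a residual set is residual. Throughout write $a_i(x,y)=d(f^i(x),f^i(y))$. For $\Sigma={\rm C}$ no shrinking is needed: $\liminf_i a_i(x,y)=0$ holds exactly when for every $N$ and every $k\ge1$ there is $i\ge N$ with $a_i(x,y)<1/k$, and since each $\{(x,y):a_i(x,y)<1/k\}$ is open this describes a $G_\delta$ set; similarly $\limsup_i a_i(x,y)\ge\delta$ holds exactly when for every $N$ and every $k\ge1$ there is $i\ge N$ with $a_i(x,y)>\delta-1/k$, again a $G_\delta$ condition. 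Hence ${\rm C}_\delta(X,f)$ is itself a $G_\delta$ subset of $X^2$, so if $du{\rm C}$ holds — that is, ${\rm C}_\delta(X,f)$ is dense for some $\delta>0$ — then that same ${\rm C}_\delta(X,f)$ is residual, giving $gu{\rm C}$.

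For $\Sigma={\rm DC1}$ the naive argument breaks down, because the clause $F_{xy}(\delta)=0$ asserts that the density of $\{i:a_i(x,y)<\delta\}$ has $\liminf$ zero, and for fixed $n$ the set $\{(x,y):|\{0\le i\le n-1:a_i(x,y)<\delta\}|\le r\}$ is \emph{closed} (the counting function is lower semicontinuous), not open. The fix is to shrink the constant and pass to a strict inequality. Fix $0<\delta'<\delta$ and let $A_{\delta'}\subset X^2$ be the set of $(x,y)$ with
\[
\limsup_{n\to\infty}\frac1n\bigl|\{0\le i\le n-1:a_i(x,y)>\delta'\}\bigr|=1
\quad\text{and}\quad
F^\ast_{xy}(t)=1\ \text{for all}\ t>0 .
\]
Each counting function $|\{0\le i\le n-1:a_i(x,y)>\delta'\}|$ and $|\{0\le i\le n-1:a_i(x,y)<t\}|$ is a finite sum of indicators of open sets, hence lower semicontinuous, so for fixed $n$ the inequality $\tfrac1n(\mathrm{count})>1-1/m$ cuts out an open set. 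Writing each of the two $\limsup=1$ clauses as ``for all $m\ge1$, for all $N$, there is $n\ge N$ with $\tfrac1n(\mathrm{count})>1-1/m$'' and intersecting the second clause over $t=1/k$, $k\ge1$, exhibits $A_{\delta'}$ as a $G_\delta$ subset of $X^2$.

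It remains to sandwich $A_{\delta'}$ between the two ${\rm DC1}$ sets by two elementary density-of-indices estimates. If $(x,y)$ is DC1-$\delta$-scrambled then, using $a_i\ge\delta\Rightarrow a_i>\delta'$,
\[
\limsup_{n\to\infty}\frac1n|\{i<n:a_i(x,y)>\delta'\}|\ \ge\ \limsup_{n\to\infty}\frac1n|\{i<n:a_i(x,y)\ge\delta\}|\ =\ 1-F_{xy}(\delta)\ =\ 1,
\]
while $F^\ast_{xy}(t)=1$ for all $t$ by hypothesis; so ${\rm DC1}_\delta(X,f)\subset A_{\delta'}$. Conversely, if $(x,y)\in A_{\delta'}$ then, since $\{i:a_i<\delta'\}$ and $\{i:a_i>\delta'\}$ are disjoint,
\[
F_{xy}(\delta')\ \le\ \liminf_{n\to\infty}\Bigl(1-\frac1n|\{i<n:a_i(x,y)>\delta'\}|\Bigr)\ =\ 0,
\]
so $A_{\delta'}\subset{\rm DC1}_{\delta'}(X,f)$. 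Assuming $du{\rm DC1}$, choose $\delta>0$ with ${\rm DC1}_\delta(X,f)$ dense and put $\delta'=\delta/2$; then $A_{\delta'}$ is a dense $G_\delta$, hence residual, hence so is the larger set ${\rm DC1}_{\delta'}(X,f)$, which is precisely $gu{\rm DC1}$.

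The only delicate part is the semicontinuity bookkeeping in the ${\rm DC1}$ case: one has to recognize that the constant cannot be kept fixed and that ``$<\delta$'' must be replaced by ``$>\delta'$'' in order to land inside a $G_\delta$ set. Once $A_{\delta'}$ is in place, the sandwiching inclusions and the passage from density to residuality are routine, and the ${\rm C}$ case needs nothing beyond the standard $G_\delta$ description of the $\limsup$/$\liminf$ conditions.
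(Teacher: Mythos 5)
Your argument is correct, and for the ${\rm C}$ case it coincides with the paper's: both exhibit ${\rm C}_\delta(X,f)$ as a $G_\delta$ subset of $X^2$ via the open conditions $d(f^i(x),f^i(y))>\delta-1/n$ and $d(f^i(x),f^i(y))<1/n$, and then upgrade density to residuality. For ${\rm DC1}$ your route genuinely diverges, and the divergence is not cosmetic. The paper writes ${\rm DC1}_\delta(X,f)=U_\delta\cap V$ with
\[
U_\delta(m,n)=\Bigl\{(x,y)\in X^2\colon\tfrac{1}{n}\bigl|\{0\le i\le n-1\colon d(f^i(x),f^i(y))<\delta\}\bigr|<\tfrac{1}{m}\Bigr\}
\]
and asserts that these sets are open; but, exactly as you observe, the counting function is a finite sum of indicators of open sets and hence only lower semicontinuous, so the sublevel condition defining $U_\delta(m,n)$ cuts out a closed set in general (already for $n=1$ and any $m\ge1$ it is $\{(x,y)\colon d(x,y)\ge\delta\}$). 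The companion sets $V(l,m,n)$ are superlevel sets of lower semicontinuous functions and are genuinely open, so only the $F_{xy}(\delta)=0$ clause is problematic, and with it the claim that ${\rm DC1}_\delta(X,f)$ itself is $G_\delta$. Your sandwich ${\rm DC1}_\delta(X,f)\subset A_{\delta'}\subset{\rm DC1}_{\delta'}(X,f)$, with $A_{\delta'}$ an honest $G_\delta$ built from open superlevel conditions on $\bigl|\{0\le i\le n-1\colon d(f^i(x),f^i(y))>\delta'\}\bigr|$, repairs this at the cost of shrinking $\delta$ to $\delta'$, which is harmless because $gu{\rm DC1}$ only asks for residuality of ${\rm DC1}_{\delta'}(X,f)$ for some $\delta'>0$; both of your density-of-indices estimates check out. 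So your proof is not only valid but more careful than the one in the paper, whose ${\rm DC1}$ half needs precisely the correction you supply.
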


\begin{proof}
For any $\delta>0$, $i\ge0$, and $n\ge1$, let
\[
S_\delta(i,n)=\{(x,y)\in X^2\colon d(f^i(x),f^i(y))>\delta-\frac{1}{n}\}
\]
and
\[
T(i,n)=\{(x,y)\in X^2\colon d(f^i(x),f^i(y))<\frac{1}{n}\},
\]
open subsets of $X^2$. Then, letting
\[
S_\delta=\bigcap_{n=1}^\infty\bigcap_{j=0}^\infty\bigcup_{i=j}^\infty S_\delta(i,n)\quad\text{and}\quad T=\bigcap_{n=1}^\infty\bigcap_{j=0}^\infty\bigcup_{i=j}^\infty T(i,n),
\]
we obtain
\[
{\rm C}_\delta(X,f)=S_\delta\cap T.
\]
Since $S_\delta$ and $T$ are $G_\delta$-subsets of $X^2$, so is ${\rm C}_\delta(X,f)$. Thus, $du{\rm C}$ implies $gu{\rm C}$.

Similarly, for any $\delta>0$ and $l,m,n\ge1$, let
\[
U_\delta(m,n)=\{(x,y)\in X^2\colon\frac{1}{n}|\{0\le i\le n-1\colon d(f^i(x),f^i(y))<\delta\}|<\frac{1}{m}\}
\]
and
\[
V(l,m,n)=\{(x,y)\in X^2\colon\frac{1}{n}|\{0\le i\le n-1\colon d(f^i(x),f^i(y))<\frac{1}{l}\}|>1-\frac{1}{m}\},
\]
open subsets of $X^2$. Then, letting
\[
U_\delta=\bigcap_{m=1}^\infty\bigcap_{j=1}^\infty\bigcup_{n=j}^\infty U_\delta(m,n)\quad\text{and}\quad V=\bigcap_{l=1}^\infty\bigcap_{m=1}^\infty\bigcap_{j=1}^\infty\bigcup_{n=j}^\infty V(l,m,n),
\]
we obtain
\[
{\rm DC1}_\delta(X,f)=U_\delta\cap V.
\]
Since $U_\delta$ and $V$ are $G_\delta$-subsets of $X^2$, so is ${\rm DC1}_\delta(X,f)$. Thus, $du{\rm DC1}$ implies $gu{\rm DC1}$.
\end{proof}

\begin{rem}
\normalfont
As Example 4.3 shows, $du{\rm DC2}$ does not always imply $g{\rm DC2}$, even if $f$ has the shadowing property.
\end{rem}

We recall a simplified version of Mycielski's theorem: \cite[Theorem 1]{My}.  A topological space is said to be {\em perfect} if it has no isolated point. A subset $S$ of $X$ is called a {\em Mycielski set} if it is a union of countably many Cantor sets.

\begin{lem}
Let $X$ be a perfect complete metric space. If $R_n$ is a residual subset of $X^n$ for each $n\ge2$, then there is a Mycielski set $S$ which is dense in $X$ and satisfies
$(x_1,x_2,\dots,x_n)\in R_n$ for any $n\ge2$ and distinct $x_1,x_2,\dots,x_n\in S$.
\end{lem}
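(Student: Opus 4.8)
The plan is to prove this by the classical Cantor-scheme (``fusion'') argument underlying the Kuratowski--Mycielski theorem: the set $S$ will be realized as the set of limit points along the branches of a carefully constructed tree of shrinking nonempty open subsets of $X$. First I would rephrase the hypothesis: for each $n\ge2$, since $R_n$ is residual in $X^n$, it contains $\bigcap_{j\ge1}G_{n,j}$ for some open dense sets $G_{n,j}\subseteq X^n$, which we may take decreasing in $j$ by passing to finite intersections. Fix also a countable base $\{W_l\}_{l\ge1}$ of nonempty open subsets of $X$; the $W_l$ will force density of $S$, while completeness of $X$ serves to pass to branch-limits, and the sole role of perfectness is the elementary fact that in a perfect space every nonempty open set contains two disjoint nonempty open subsets, i.e.\ there is always ``room to split''.

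The core is an inductive construction, carried out in stages $m=1,2,\dots$, of nonempty open sets $U_s$ indexed by a countable tree $\mathcal T$ which, below its root, is a disjoint union of copies $\mathcal T_l$ of $2^{<\omega}$ ($l\ge1$), the top set of $\mathcal T_l$ being contained in $W_l$. Along each $\mathcal T_l$ I impose the Cantor-scheme conditions $\overline{U_{s^\frown i}}\subseteq U_s$ for $i\in\{0,1\}$, $U_{s^\frown0}\cap U_{s^\frown1}=\emptyset$, and $\mathrm{diam}\,U_s\le 2^{-|s|}$. At stage $m$ one introduces the top set of $\mathcal T_m$ inside $W_m$ and develops $\mathcal T_1,\dots,\mathcal T_m$ uniformly one further level, so that only finitely many open sets are defined at a given stage. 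The extra requirement, imposed at each stage, is: whenever $s_1,\dots,s_n$ are pairwise distinct leaves of the current tree (hence determine pairwise distinct branches), the box $U_{s_1}\times\cdots\times U_{s_n}$ is contained in $G_{n,j}$ for all $n,j\le m$. This is attainable because at stage $m$ there are only finitely many such boxes: processing them one at a time, each box is shrunk to a nonempty product subbox lying in the open dense set $G_{n,j}$ (choose a point of the box in $G_{n,j}$, then a product neighbourhood of it inside $G_{n,j}$), and only after all boxes of the stage have been handled does one split each resulting leaf set into two disjoint small-diameter open pieces with closures inside it, using perfectness. Since boxes of deeper leaves refine boxes of shallower ones, once a box has been placed in $G_{n,j}$ this persists at all later stages.

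With the scheme in hand, set $C_l=\bigcap_{m}\bigcup\{\overline{U_s}: s\in\mathcal T_l,\ |s|=m\}$. By completeness and the diameter condition each branch of $\mathcal T_l$ determines a unique point, so $C_l$ is nonempty and compact; the splitting condition makes it perfect and totally disconnected, hence a Cantor set, with $C_l\subseteq W_l$. Then $S=\bigcup_{l\ge1}C_l$ is a Mycielski set, dense because $\{W_l\}$ is a base. For the independence property, take distinct $x_1,\dots,x_n\in S$, say $x_i\in C_{l_i}$; since they are distinct, there is a stage $m_0$ beyond which their branches are pairwise separated, so that for every $m\ge m_0$ one has $x_i\in U_{s_i}$ for pairwise distinct leaves $s_1,\dots,s_n$ at stage $m$. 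By construction $U_{s_1}\times\cdots\times U_{s_n}\subseteq G_{n,j}$ for all $j\le m$; letting $m\to\infty$ with $j$ fixed gives $(x_1,\dots,x_n)\in G_{n,j}$ for every $j$, hence $(x_1,\dots,x_n)\in\bigcap_j G_{n,j}\subseteq R_n$, as required.

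The main obstacle is purely the bookkeeping. At any finite stage only finitely many of the countably many requirements are visible --- the density targets $W_l$, the pairs $(n,j)$, the configurations of distinct leaves --- so one must diagonalize so that each requirement is eventually met while never violating the Cantor-scheme conditions already established. The delicate point, which the staging organizes cleanly, is to interleave ``shrink a product box into $G_{n,j}$'' with ``split a leaf into two disjoint pieces'' without destroying disjointness or nestedness: this is arranged by performing all box-refinements of a stage first (one box at a time, so each leaf set is shrunk only finitely often) and splitting only at the very end of the stage.
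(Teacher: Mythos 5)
Your construction is correct. Note, however, that the paper does not prove this lemma at all: it is quoted as a simplified version of Mycielski's theorem with a citation to Mycielski's original article, so there is no ``paper proof'' to compare against. What you have written is a self-contained proof along the standard Kuratowski--Mycielski lines: replace each residual $R_n$ by a decreasing intersection of dense open sets $G_{n,j}$, build a fusion scheme of shrinking open sets organized into countably many binary trees (one rooted inside each basic open set $W_l$ to force density), and at each finite stage shrink the finitely many product boxes indexed by tuples of distinct leaves into the finitely many $G_{n,j}$ currently under consideration before splitting leaves. The two points that usually cause trouble are both handled: a single leaf set may be shrunk several times within one stage because it appears in several boxes, but each shrinking preserves all containments already arranged (a subbox of a box inside $G_{n,j}$ is still inside $G_{n,j}$), and the containments persist to later stages because deeper boxes refine shallower ones. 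The passage to the limit is also sound: distinct points of $S$ lie on branches that are eventually separated into pairwise distinct leaves, so the box containments give $(x_1,\dots,x_n)\in\bigcap_j G_{n,j}\subseteq R_n$. Since the argument is the classical one and fills in a proof the paper merely cites, there is nothing to object to.
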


\begin{rem}
\normalfont
Let $f\colon X\to X$ be a continuous map and let $\Sigma\in\{{\rm C},{\rm DC1}\}$. If $f$ exhibits $gu\Sigma$, then $\Sigma_\delta(X,f)$ is a residual subset of $X^2$ for some $\delta>0$, so $X$ is perfect, and by Lemma 1.2, there is a dense Mycielski subset $S$ of $X$ for which any $(x,y)\in S^2$ with $x\ne y$ satisfies $(x,y)\in\Sigma_\delta(X,f)$. Conversely, when $X$ is perfect, if there are $\delta>0$ and a dense subset $S$ of $X$ such that any $(x,y)\in S^2$ with $x\ne y$ satisfies $(x,y)\in\Sigma_\delta(X,f)$, then $\Sigma_\delta(X,f)$ is a dense subset of $X^2$, i.e., $f$ exhibits $du\Sigma$ and so does $gu\Sigma$ by Lemma 1.1. An importance of $gu\Sigma$, $\Sigma\in\{{\rm C},{\rm DC1}\}$, is due to this equivalence. 
\end{rem}

As for the interval maps, in \cite{S1}, Snoha gave some equivalent properties to $gu{\rm C}$ and showed their equivalence to $g{\rm C}$; and in \cite{S2}, some equivalent properties to  $d{\rm C}$ were given. Note that  in \cite{S1}, an example was provided of an interval map which exhibits $d{\rm C}$ but does not exhibit $g{\rm C}$ (see also \cite{R}). As shown by Murinov\'a \cite{Mu}, $gu{\rm C}$ and $g{\rm C}$ are not equivalent for general continuous maps. Compared to these, for all continuous maps with the shadowing property, the main theorems of this paper give some equivalent properties to $gu\Sigma$,  $\Sigma\in\{{\rm C},{\rm DC1}\}$, in terms of chains and show their equivalence to $g\Sigma$. We also provide an example of a continuous map which has the shadowing property and exhibits $d{\rm DC1}$ but does not exhibit $g{\rm C}$.

Any continuous map $f\colon X\to X$ exhibiting $d{\rm C}$ is regionally proximal, so the regional proximality is a common feature of all chaos dealt with in this paper (see Lemma 2.3). In view of the shadowing, this indicates that it is natural to consider a chain analog of the proximality. The precise definitions are given as follows.

\begin{defi}
\normalfont
Given a continuous map $f\colon X\to X$, a pair $(x,y)\in X^2$ is said to be
\begin{itemize}
\item {\em proximal} if
\[
\liminf_{i\to\infty}d(f^i(x),f^i(y))=0,
\]
\item {\em regionally proximal} if for any $\epsilon>0$, there are $(z,w)\in X^2$ and $i\ge0$ such that
\[
\max\{d(x,z),d(y,w),d(f^i(z),f^i(w))\}\le\epsilon,
\]
\item {\em chain proximal} if for any $\delta>0$, there is a pair
\[
((x_i)_{i=0}^k,(y_i)_{i=0}^k)
\]
of $\delta$-chains of $f$ with $(x_0,y_0)=(x,y)$ and $x_k=y_k$.
\end{itemize}

We say that $f$ is {\em proximal} (resp. {\em regionally proximal}, {\em chain proximal}) if every $(x,y)\in X^2$ is a proximal (resp. regionally proximal, chain proximal) pair for $f$.
\end{defi}

\begin{rem}
\normalfont
If $f$ is regionally proximal, then it is chain proximal, and the converse holds if $f$ has the shadowing property. 
\end{rem}

Lemma 3.2 characterizes the chain proximality in terms of the structure of chain components: a continuous map $f\colon X\to X$ is chain proximal if only if it has a unique chain stable chain component, and the map restricted to the component is chain mixing. The definitions of the chain components and the chain stability are given in Section 2.

We see that any continuous map $f\colon X\to X$ exhibiting $du{\rm C}$ is sensitive, so the sensitivity is another common feature of all uniform chaos dealt with in this paper (see Lemma 2.4). Similarly as for the proximality, we define a chain analog of the sensitivity.

\begin{defi}
\normalfont
Let $f\colon X\to X$ be a continuous map. For $e>0$, $x\in X$ is said to be an {\em $e$-sensitive point} for $f$ if for any $\epsilon>0$, there are $y,z\in X$ and $i\ge0$ such that
\[
\max\{d(x,y),d(x,z)\}\le\epsilon
\]
and $d(f^i(y),f^i(z))>e$. We call $x\in X$ a {\em chain $e$-sensitive point} for $f$ if for any $\delta>0$, there is a pair
\[
((x_i)_{i=0}^k,(y_i)_{i=0}^k)
\]
of $\delta$-chains of $f$ with $x_0=y_0=x$ and $d(x_k,y_k)>e$. We say that $f$ is {\em sensitive} (resp. {\em chain sensitive}) if there exists $e>0$ for which every $x\in X$ is an $e$-sensitive (resp. chain $e$-sensitive) point for $f$.
\end{defi}

\begin{rem}
\normalfont
If $f$ is sensitive, then it is chain sensitive, and the converse holds if $f$ has the shadowing property.
\end{rem}

The first main theorem is the following.

\begin{thm}
Given any continuous map $f\colon X\to X$ with the shadowing property, the following properties are equivalent:
\begin{itemize}
\item[(1)] $f$ is chain sensitive and chain proximal,
\item[(2)] $f$ is chain proximal, and the unique chain stable chain component $C$ for $f$ is not a singleton,
\item[(3)] $f$ exhibits $gu{\rm C}$,
\item[(4)] $f$ exhibits $du{\rm C}$,
\item[(5)] $f$ exhibits $g{\rm C}$.
\end{itemize}
\end{thm}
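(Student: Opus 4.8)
The plan is to prove the chain of implications $(1)\Leftrightarrow(2)$, then $(2)\Rightarrow(3)\Rightarrow(4)\Rightarrow(1)$, and finally $(3)\Rightarrow(5)\Rightarrow(4)$ to close the loop, using the shadowing property to transfer chain-level constructions to genuine orbits.

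For $(1)\Leftrightarrow(2)$: by Lemma 3.2 (cited in the excerpt), chain proximality is equivalent to having a unique chain stable chain component $C$ on which $f$ is chain mixing. Given chain proximality, chain sensitivity should be equivalent to $C$ not being a singleton: if $C=\{p\}$ then every point chain-converges to $p$ and no chain $e$-sensitivity is possible; conversely, if $C$ is not a singleton, chain mixing on $C$ together with the chain proximality (which lets us move any starting pair $(x,x)$ into $C$ via $\delta$-chains) produces, for any prescribed $e$ smaller than $\mathrm{diam}(C)$, two $\delta$-chains from $x$ ending at points of $C$ far apart. So $(1)$ and $(2)$ are essentially a repackaging via Lemma 3.2.

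For the hard direction $(2)\Rightarrow(3)$, I would build a residual set of $\mathrm{C}_\delta$-scrambled pairs directly, exploiting Remark 1.3: it suffices to produce, for a fixed $\delta>0$ (take $\delta$ a fraction of $\mathrm{diam}(C)$), a dense set of points $S$ with $S^2\setminus\Delta\subset\mathrm{C}_\delta(X,f)$, or equivalently to show $\mathrm{C}_\delta(X,f)$ is dense in $X^2$ and invoke Lemma 1.1. The core construction is: given any pair $(x,y)$ and any $\eta>0$, build a $\delta'$-pseudo orbit starting near $(x,y)$ (more precisely, two pseudo orbits from points $\eta$-close to $x$ and to $y$) that alternates between long stretches where the two pseudo orbits are forced together — using chain proximality to merge them at a common point and then follow a single orbit — and long stretches where chain mixing on $C$ drives them $>\delta$ apart; then apply shadowing to get genuine points $x',y'$ with $d(x,x')\le\eta$, $d(y,y')\le\eta$ realizing $\liminf d(f^i(x'),f^i(y'))=0$ and $\limsup d(f^i(x'),f^i(y'))\ge\delta$. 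One must interleave the two requirements at a sufficiently sparse but infinite sequence of times so that both the liminf and limsup conditions hold along the shadowing orbit; this bookkeeping, together with choosing the shadowing $\epsilon$ small relative to $\delta$ and the contraction toward the diagonal, is the main obstacle.

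The remaining implications are lighter. $(3)\Rightarrow(4)$ is immediate (residual implies dense). For $(4)\Rightarrow(1)$: $du\mathrm{C}$ forces sensitivity (Lemma 2.4 from the excerpt), hence chain sensitivity (Remark 1.5), and it forces regional proximality (every point of the dense scrambled set is proximal to nearby points, so $f$ is regionally proximal by Lemma 2.3), hence chain proximal (Remark 1.4); so $(1)$ holds. Finally $(3)\Rightarrow(5)$ is trivial since $\mathrm{C}_\delta(X,f)\subset\mathrm{C}(X,f)$, and $(5)\Rightarrow(4)$ follows because $g\mathrm{C}$ gives $d\mathrm{C}$, and then a compactness/finiteness argument over the countably many $\delta$-levels shows a single $\delta$ works on a dense set — or more directly, one re-runs the $(2)\Rightarrow(3)$ argument after noting $g\mathrm{C}$ implies $X$ is perfect and $f$ is chain sensitive and chain proximal by the same Lemma 2.3/2.4 reasoning applied to the residual (hence dense) set $\mathrm{C}(X,f)$, which lands us back at $(1)$ and thus at $(3)$ and $(4)$. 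I expect the write-up to route $(5)\Rightarrow(1)$ this way rather than arguing a uniform $\delta$ directly.
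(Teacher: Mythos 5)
Your overall architecture (equivalence of (1) and (2) via Lemma 3.2, the easy implications, and a hard implication from the chain conditions to $gu{\rm C}$) matches the paper, but two of your steps have genuine problems.

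First, your direct construction for $(2)\Rightarrow(3)$ does not deliver $\liminf_{i\to\infty}d(f^i(x'),f^i(y'))=0$. If you build two pseudo orbits that merge on long stretches and then apply shadowing with a fixed tolerance $\epsilon$, the two shadowing orbits are only forced within $2\epsilon$ of each other on those stretches, so you get $\liminf\le 2\epsilon$, not $0$; no amount of interleaving bookkeeping fixes this, since $\epsilon$ is tied to the single chain precision $\delta'$ you chose at the start. The paper avoids the issue entirely: it never tries to exhibit scrambled pairs near $(x,y)$ directly, but instead shows (Lemma 3.4, which needs only sensitivity and regional proximality, no shadowing) that each open set $\bigcup_{i\ge j}S_{e/2}(i,n)$ and $\bigcup_{i\ge j}T(i,n)$ from the $G_\delta$ decomposition of ${\rm C}_{e/2}(X,f)$ is dense --- each such density statement requires only one finite-precision approximation, so a single application of shadowing per open set suffices --- and then concludes residuality by Baire. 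Under shadowing, chain sensitivity and chain proximality upgrade to sensitivity and regional proximality (Remarks 1.4 and 1.5), so $(1)\Rightarrow(3)$ follows. If you want to keep your ``dense scrambled set'' formulation you would need a limiting construction with tolerances $\epsilon_k\to0$, which is substantially more work than your sketch acknowledges.

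Second, and more seriously, your route for $(5)\Rightarrow(4)$ fails. Neither of your two suggestions works: there is no ``compactness/finiteness argument'' extracting a uniform $\delta$ from $g{\rm C}$ (indeed $g{\rm C}$ and $gu{\rm C}$ are not equivalent without shadowing, by Murinov\'a's example, so any such argument must use shadowing essentially); and Lemma 2.4 requires $du{\rm C}$, i.e.\ a uniform $\delta$ on a dense set, whereas $g{\rm C}$ only gives that near any point there are pairs with $\limsup_i d(f^i(y),f^i(z))\ge\delta_{(y,z)}$ for a pair-dependent $\delta_{(y,z)}$ that may shrink to $0$ --- so sensitivity, hence chain sensitivity, does not follow ``by the same reasoning.'' You only recover regional proximality this way. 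The paper closes the loop with a different argument (Lemma 3.5): using shadowing and the Lyapunov stability of the unique chain stable component $C$, it shows $W^s(C,f)\times W^s(C,f)$ is a dense $G_\delta$ in $X^2$, intersects it with the residual set ${\rm C}(X,f)$, takes a scrambled pair $(z,w)$ in the intersection, and uses $\limsup_i d(f^i(z),f^i(w))\ge\delta>0$ together with $z,w\in W^s(C,f)$ to produce two distinct points of $C$, showing $C$ is not a singleton, i.e.\ property (2). This is the one place where the shadowing hypothesis does work that your proposal leaves undone.
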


By the properties (1) and (2), for continuous maps with the shadowing property, Theorem 1.1 gives two equivalent properties to $gu{\rm C}$ in terms of chains. Note that by the property (2) with Lemmas 2.1 and 3.2, the global chain structure is largely specified. Also, note that this theorem states the equivalence of $gu{\rm C}$ and $g{\rm C}$ under the shadowing.

To state the other main theorem, we need a definition.

\begin{defi}
\normalfont
Given a continuous map $f\colon X\to X$, a pair $(x,y)\in X^2$ is said to be a {\em distal pair} for $f$ if it is not proximal, i.e.,
\[
\inf_{i\ge0}d(f^i(x),f^i(y))>0.
\]
We say that $f$ satisfies a {\em property S} if there is $e>0$ such that for any $(x,y)\in X^2$ and $\delta>0$, there exists a pair
\[
((x_i)_{i=0}^\infty,(y_i)_{i=0}^\infty)
\]
of $\delta$-pseudo orbits of $f$ with $(x_0,y_0)=(x,y)$ and $\liminf_{i\to\infty}d(x_i,y_i)>e$.
\end{defi}

Then, the theorem is the following.

\begin{thm}
Given any continuous map $f\colon X\to X$ with the shadowing property, the following properties are equivalent:
\begin{itemize}
\item[(1)] $f$ has the property S and is chain proximal,
\item[(2)] $f$ is chain proximal, and the unique chain stable chain component $C$ for $f$ contains a distal pair for $f|_C$,
\item[(3)] $f$ exhibits $gu{\rm DC1}$,
\item[(4)] $f$ exhibits $du{\rm DC1}$,
\item[(5)] $f$ exhibits $g{\rm DC1}$.
\end{itemize}
\end{thm}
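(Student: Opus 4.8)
The plan is to prove the equivalences via $(1)\Leftrightarrow(2)$, $(2)\Rightarrow(3)$, $(3)\Leftrightarrow(4)$, $(3)\Rightarrow(5)$, and $(5)\Rightarrow(2)$, which closes the loop. Two links are immediate: $(3)\Rightarrow(4)$ and $(3)\Rightarrow(5)$ because a residual set is dense and ${\rm DC1}_\delta(X,f)\subseteq{\rm DC1}(X,f)$, while $(4)\Rightarrow(3)$ is Lemma 1.1. So the substance lies in $(1)\Leftrightarrow(2)$, $(2)\Rightarrow(3)$, and $(5)\Rightarrow(2)$. Throughout I would lean on the structure of chain proximal maps from Lemma 3.2: the unique chain stable chain component $C$, with $f|_C$ chain mixing, and the fact that every point, hence every $\omega$-limit set, is chain attracted to and therefore lies in $C$.

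For $(1)\Leftrightarrow(2)$ I would argue purely at the level of chains. Given $(2)$, fix a distal pair $(x^\ast,y^\ast)\in C^2$ for $f|_C$ and put $e^\ast=\inf_{i\ge0}d(f^i(x^\ast),f^i(y^\ast))>0$. For an arbitrary $(x,y)\in X^2$ and $\delta>0$, first steer $x$ and $y$ into $C$ by $\delta$-chains, then prolong these chains inside $C$, using chain mixing of $f|_C$, so that they reach $x^\ast$ and $y^\ast$ at one common time $s$; appending the genuine orbits $f^{i-s}(x^\ast),f^{i-s}(y^\ast)$ for $i\ge s$ yields $\delta$-pseudo orbits from $(x,y)$ whose $i$-th entries stay at distance $\ge e^\ast$ for all $i\ge s$, so $f$ has the property S with $e=e^\ast/2$ and $(1)$ holds. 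Conversely, given $(1)$, apply the property S to any pair to obtain $\delta$-pseudo orbits whose entries are eventually $>e$ apart; $\epsilon$-shadowing them with $\epsilon<e/3$ produces $u,v$ with $d(f^i(u),f^i(v))>e/3$ for all $i\ge I$, so the $f\times f$-orbit closure of $(f^I(u),f^I(v))$ is contained in $\{(p,q):d(p,q)\ge e/3\}$ and, by the structure above, in $C\times C$; any of its points is a distal pair for $f|_C$, whence $C$ contains one and is not a singleton, giving $(2)$.

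For $(2)\Rightarrow(3)$ the device I would use is that it suffices to prove density of each of the countably many open sets whose intersection equals ${\rm DC1}_\delta(X,f)$ in the proof of Lemma 1.1: with $\delta=e^\ast/3$ these are $\bigcup_{n\ge j}U_\delta(m,n)$ and $\bigcup_{n\ge j}V(l,m,n)$, and then Baire's theorem yields that ${\rm DC1}_\delta(X,f)$ is residual, i.e.\ $f$ exhibits $gu{\rm DC1}$ — no single distributionally scrambled pair ever needs to be built. To place a pair from $\bigcup_{n\ge j}U_\delta(m,n)$ near a prescribed $(a,b)$: steer $a$ and $b$ onto $x^\ast$ and $y^\ast$ at a common time $s$ by $\delta'$-chains with $\delta'$ small enough that $\delta'$-pseudo orbits are $\epsilon$-shadowed for a chosen $\epsilon<e^\ast/3$ that is also smaller than the distance from $a$ and $b$ to the boundary of the given box, follow the distal orbit, and $\epsilon$-shadow; the resulting pair is $\epsilon$-close to $(a,b)$ and has $d(f^i(\cdot),f^i(\cdot))>e^\ast-2\epsilon>\delta$ for $i\ge s$, hence lies in $U_\delta(m,n)$ for any $n>\max\{j,ms\}$. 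The case $\bigcup_{n\ge j}V(l,m,n)$ is the same argument with $a$ and $b$ steered onto a single point of $C$ at a common time $s$ and shadowed with $\epsilon<1/(2l)$ (again small relative to the box), so that the two shadowing points stay within $2\epsilon<1/l$ from time $s$ onward; the freedom to shrink $\epsilon$ with $l$ is precisely what makes this work.

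For $(5)\Rightarrow(2)$: $g{\rm DC1}$ makes ${\rm DC1}(X,f)$, hence ${\rm C}(X,f)$, dense, so $f$ exhibits $d{\rm C}$ and is regionally proximal by Lemma 2.3, hence chain proximal by Remark 2.2, so the structure above applies. Also ${\rm DC1}(X,f)=\bigcup_n{\rm DC1}_{1/n}(X,f)$ is residual, so some ${\rm DC1}_\delta(X,f)$ is nonempty; pick $(u,v)$ in it. Since $F_{uv}(\delta)=0$, along some $n_k\to\infty$ the proportion of $i<n_k$ with $d(f^i(u),f^i(v))<\delta$ tends to $0$, so any weak-$\ast$ limit $\mu$ (along a further subsequence) of the empirical measures $\frac1{n_k}\sum_{i<n_k}\delta_{(f^i(u),f^i(v))}$ is an $f\times f$-invariant Borel probability measure with $\mu(\{(p,q):d(p,q)<\delta\})=0$; its support is an $f\times f$-invariant closed set contained in $\omega_{f\times f}(u,v)\subseteq C\times C$ on which $d\ge\delta$, so any of its points is a distal pair for $f|_C$, and $(2)$ holds. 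I expect the genuinely delicate points to be the use of the chain-component structure theory (Lemma 3.2 and the capture of all $\omega$-limit sets by $C$) in $(1)\Leftrightarrow(2)$ and $(5)\Rightarrow(2)$; granting that, the density arguments for $(2)\Rightarrow(3)$ are routine bookkeeping, the only real idea there being to prove density of the individual open sets rather than to manufacture a distributionally scrambled pair directly.
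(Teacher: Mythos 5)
Your overall architecture (the cycle $(1)\Leftrightarrow(2)$, $(1)\Rightarrow(3)\Rightarrow(5)\Rightarrow(2)$, $(3)\Leftrightarrow(4)$) matches the paper's, and your $(2)\Rightarrow(1)$ and $(1)\Rightarrow(3)$ arguments are essentially the paper's Lemmas 3.6 and 3.7. But there is a genuine gap in the structural claim you lean on twice, namely that ``every point, hence every $\omega$-limit set, is chain attracted to and therefore lies in $C$.'' Chain attraction to $C$ (Lemma 2.1) does not imply that actual orbits converge to $C$, even under shadowing: in Example 4.3 of the paper the unique chain stable component is $X_\infty$, yet every point of the mixing subshift $X_k$ has its $\omega$-limit set inside $X_k$, disjoint from $X_\infty$. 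This breaks your $(5)\Rightarrow(2)$ as written: you take an arbitrary $(u,v)\in{\rm DC1}_\delta(X,f)$ and conclude that the support of the limiting invariant measure sits in $C\times C$, but that support can live in some other chain component entirely. Indeed, Example 4.5 has the shadowing property and nonempty ${\rm DC1}_\delta(X,f)$ while its unique chain stable component is a singleton, so an argument using only nonemptiness of ${\rm DC1}_\delta$ cannot yield a distal pair in $C$. The correct repair is the paper's Lemma 3.8 (via Lemmas 2.2 and 3.5): shadowing plus chain stability make $W^s(C,f)$ a dense $G_\delta$, hence $W^s(C,f)\times W^s(C,f)$ residual, and one intersects this with the residual set ${\rm DC1}(X,f)$ to pick $(u,v)$ whose orbit genuinely converges to $C$; residuality of ${\rm DC1}(X,f)$, not mere nonemptiness, is what is actually used.

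The same false claim infects your $(1)\Rightarrow(2)$: after shadowing the property-S pseudo orbits there is no reason the orbit closure of $(f^I(u),f^I(v))$ lies in $C\times C$. This direction is repairable (start the property-S pseudo orbits at a pair in $C^2$, use chain stability to keep everything $\epsilon$-close to $C$, and pass to a limit as $\epsilon\to0$), but note that the paper proves $(1)\Leftrightarrow(2)$ for arbitrary continuous maps, with no shadowing, by a compactness contradiction: if $f|_C$ were proximal then proximality on $C$ would be uniform, forcing every pair of fine pseudo-orbits starting in $C^2$ to come within $4\epsilon$ infinitely often, contradicting property S. Your remaining steps ($(2)\Rightarrow(1)$ by steering onto a distal pair, and the density arguments for $(1)\Rightarrow(3)$) are sound and coincide with the paper's.
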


For all continuous maps with the shadowing property, the properties (1) and (2) give two equivalent properties to $gu{\rm DC1}$ in terms of chains. Similarly as in the case of Theorem 1.1, by the property (2) with Lemmas 2.1 and 3.2, the global chain structure is largely determined. Also, Theorem 1.2 states the equivalence of $gu{\rm DC1}$ and $g{\rm DC1}$ under the shadowing.

This paper consists of four sections. In Section 2, we give some basic definitions and prove preparatory lemmas. The main theorems are proved in Section 3. In Section 4, we give several examples illustrating the main theorems. Example 4.1 gives a very simple proximal continuous map exhibiting $gu{\rm C}$. Example 4.2 is a modification of Example 4.1. It shows that $gu{\rm C}$ and the property (2) in Theorem 1.2 do not necessarily imply $gu{\rm DC1}$ in absence of the shadowing property. Example 4.3 gives a continuous map $f\colon X\to X$ with the following properties:
\begin{itemize}
\item $f$ has the shadowing property,
\item $f$ exhibits $gu{\rm C}$,
\item $f$ has the limit-shadowing property, but its restriction to the chain recurrent set does not have the limit-shadowing property,   
\item $f$ exhibits $du{\rm DC2}$ and $d{\rm DC1}$ but does not exhibit $g{\rm DC2}$.
\end{itemize}
The third property gives an answer to a question in \cite{K1}. Example 4.4 provides a continuous map with the shadowing property exhibiting $gu{\rm DC1}$. Finally, Example 4.5 gives a continuous map which has the shadowing property and exhibits $d{\rm DC1}$ but does not exhibit $g{\rm C}$.

\section{Basic definitions and lemmas}

In this section, we define the chain components and the chain stability, and prove some lemmas. 

\begin{defi}
\normalfont
Given a continuous map $f\colon X\to X$, a $\delta$-chain $(x_i)_{i=0}^{k}$ of $f$ is said to be a {\em $\delta$-cycle} of $f$ if $x_0=x_k$.  We call $x\in X$ a {\em chain recurrent point} for $f$ if for any $\delta>0$, there exists a $\delta$-cycle $(x_i)_{i=0}^{k}$ of $f$ with $x_0=x_k=x$.
\end{defi}

The set of chain recurrent points for $f$ is a closed $f$-invariant subset of $X$, and we denote it by $CR(f)$. For any $x,y\in X$ and $\delta>0$, the notation $x\rightarrow_\delta y$ means that there is a $\delta$-chain $(x_i)_{i=0}^k$ of $f$ with $(x_0,x_k)=(x,y)$. For $x,y\in X$, we write $x\rightarrow y$ if $x\rightarrow_\delta y$ for all $\delta>0$.

\begin{defi}
\normalfont
A continuous map $f\colon X\to X$ is said to be {\em chain transitive} if $x\rightarrow y$ for any $x,y\in X$.
\end{defi}

For any $\delta>0$, define an equivalence relation $\leftrightarrow_\delta$ in
\[
CR(f)^2=CR(f)\times CR(f)
\]
as follows: for any $x,y\in CR(f)$, $x\leftrightarrow_\delta y$ iff  $x\rightarrow_\delta y$ and $y\rightarrow_\delta x$. We denote by $\mathcal{C}_\delta(f)$ the set of equivalence classes of $\leftrightarrow_\delta$. Note that every $x\in CR(f)$ satisfies $x\leftrightarrow_\delta f(x)$, and any $x,y\in CR(f)$ with $d(x,y)\le\delta$ satisfy $x\leftrightarrow_\delta y$, so each $C\in\mathcal{C}_\delta(f)$ is a clopen $f$-invariant subset of $CR(f)$. We define an equivalence relation $\leftrightarrow$ in $CR(f)^2$ by for any $x,y\in CR(f)$, $x\leftrightarrow y$ iff $x\rightarrow y$ and $y\rightarrow x$, which is equivalent to $x\leftrightarrow_\delta y$ for all $\delta>0$.

\begin{defi}
\normalfont
An equivalence class $C$ of $\leftrightarrow$ is said to be a {\em chain component} for $f$. We denote by $\mathcal{C}(f)$ the set of chain components for $f$.
\end{defi}

Then, the following properties hold:
\begin{itemize}
\item $CR(f)=\bigsqcup_{C\in\mathcal{C}(f)}C,$
\item Any $C\in\mathcal{C}(f)$ is a closed $f$-invariant subset of $CR(f)$,
\item $f|_C\colon C\to C$ is chain transitive for all $C\in\mathcal{C}(f)$.
\end{itemize}

A partition $\mathcal{P}$ of a set $Y$ is a set of disjoint subsets of $Y$ whose union is $Y$. For any partition $\mathcal{P}$ of $Y$ and $y\in Y$, we define $\mathcal{P}(y)\in\mathcal{P}$ by $y\in\mathcal{P}(y)$. Note that for any continuous map $f\colon X\to X$, $\mathcal{C}_\delta(f)$, $\delta>0$, and $\mathcal{C}(f)$ are partitions of $CR(f)$, and
\[
[\mathcal{C}(f)](x)=\bigcap_{\delta>0}[\mathcal{C}_\delta(f)](x)
\]   
for every $x\in CR(f)$.

\begin{defi}
\normalfont
Given a continuous map $f\colon X\to X$, we say that a closed $f$-invariant subset $S$ of $X$ is {\em chain stable} if for any $\epsilon>0$, there is $\delta>0$ such that every $\delta$-chain $(x_i)_{i=0}^k$ of $f$ with $x_0\in S$ satisfies $d(x_i,S)\le\epsilon$ for all $0\le i\le k$. We denote by $\mathcal{C}^s(f)$ the set of $C\in\mathcal{C}(f)$ being chain stable.
\end{defi}

The following lemma states that for any continuous map $f\colon X\to X$, any $x\in X$ can reach to a single chain stable chain component for $f$ by arbitrarily precise chains of $f$. It describes a global structure of chains. 

\begin{lem}
Let $f\colon X\to X$ be a continuous map. For any $x\in X$, there exists $C\in\mathcal{C}^s(f)$ such that for every $\delta>0$, there is a $\delta$-chain $(x_i)_{i=0}^k$ of $f$ with $x_0=x$ and $x_k\in C$.
\end{lem}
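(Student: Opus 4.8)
The plan is to realize the required component as a suitably minimal set inside the \emph{reachability set}
$E(x)=\{y\in X:x\to y\}$, and then to show such a minimal set is automatically a chain stable chain component.

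First I would record two elementary facts. The relation $\to$ is transitive: concatenating a $\delta$-chain from $a$ to $b$ with one from $b$ to $c$ produces a $\delta$-chain from $a$ to $c$. And $E(x)$ is closed: if $y$ is a limit of points $y_n$ with $x\to_{\delta'}y_n$, then for any $\delta>\delta'$ one may pick $n$ with $d(y_n,y)\le\delta-\delta'$ and replace the last point of a $\delta'$-chain from $x$ to $y_n$ by $y$, obtaining a $\delta$-chain from $x$ to $y$; hence $E(x)=\bigcap_{\delta>0}\overline{\{y:x\to_\delta y\}}$ is closed. Note also $f(x)\in E(x)$, so $E(x)\ne\emptyset$. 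With these facts, the goal reduces to finding a chain stable chain component $C$ with $C\subseteq E(x)$: then for any $\delta>0$ every $c\in C$ is reached from $x$ by a $\delta$-chain, which is exactly the assertion.

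Next I would run a Zorn argument. Call $F\subseteq X$ \emph{forward closed} if $y\in F$ and $y\to w$ imply $w\in F$, and let $\mathcal G$ be the family of nonempty closed forward closed sets meeting $E(x)$. Then $X\in\mathcal G$, and closedness, forward closedness, and the property of meeting $E(x)$ all persist under a nested intersection (the last by compactness of $X$), so $\mathcal G$ has a $\subseteq$-minimal element $M$. For any $m\in M\cap E(x)$, the set $E(m)$ is again closed and forward closed, contains $f(m)\in M\cap E(x)$, and satisfies $E(m)\subseteq M$ since $M$ is forward closed and $m\in M$; minimality forces $E(m)=M$. In particular $M=E(m)\subseteq E(x)$, so in fact $E(m)=M$ for \emph{every} $m\in M$. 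Hence any two points of $M$ reach each other, whence $M\subseteq CR(f)$ and $M$ sits inside a single chain component $C$; since $C=[\mathcal C(f)](m)\subseteq\{y:m\to y\}=E(m)=M$, in fact $M=C\in\mathcal C(f)$. Finally, $M$ is chain stable: if not, there are $\epsilon_0>0$ and $\delta_n\downarrow 0$ with $\delta_n$-chains from points of $M$ reaching points $w_n$ with $d(w_n,M)>\epsilon_0$ (truncate at the first offending point). Prepending a $\delta_n$-chain from a fixed $m\in M$ (available since $M$ is a chain component), passing to a subsequence along which $w_n$ converges to some $v$ with $d(v,M)\ge\epsilon_0$, and replacing the endpoint of the chain by $v$, one gets $m\to_\eta v$ for every $\eta>0$, so $m\to v$ and $v\in E(m)=M$ — contradicting $d(v,M)\ge\epsilon_0>0$. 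Thus $M\in\mathcal C^s(f)$ and $M\subseteq E(x)$, as required.

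I expect the crux to be that last paragraph: proving that the minimal member $M$ of $\mathcal G$ is genuinely a chain component — which rests on the bootstrap $M\subseteq E(x)$, turning every point of $M$ into a ``generator'' of $M$ — and that $M$ is chain stable, i.e.\ that arbitrarily fine chains issuing from $M$ cannot escape far from $M$ (the compactness-plus-endpoint-replacement argument above). A more computational route would avoid Zorn: for each $\delta>0$ the quotient $\mathcal C_\delta(f)$ is finite and carries the natural reachability order, so one could take a maximal class reachable from $x$ and let $\delta\to 0$; but controlling the limit object requires essentially the same closedness and stability bookkeeping, so I would favor the formulation above.
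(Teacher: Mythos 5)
Your proof is correct, and it takes a genuinely different route from the paper's. The paper puts the reachability preorder $\le$ directly on the set of chain components $\mathcal C(f)$, proves that every chain in $(\mathcal C(f),\le)$ has an upper bound by passing to the finite quotients $\mathcal C_\delta(f)$, choosing maximal classes $A_{\delta_n}$, and extracting a Hausdorff limit $Y$; Zorn then gives a maximal component above one reachable from $x$, and chain stability is deduced from maximality via an $\omega$-limit set argument. You instead run Zorn downward on the family of nonempty closed forward-closed sets meeting $E(x)=\{y:x\to y\}$, and the bootstrap $E(m)=M$ for every $m\in M$ does triple duty: it puts $M$ inside $E(x)$, identifies $M$ as a chain component, and yields chain stability by the endpoint-replacement/compactness argument (if fine chains escaped an $\epsilon_0$-neighborhood, the limit of the escape points would lie in $E(m)=M$). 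Your version is essentially the classical ``minimal set'' argument transplanted from orbit closures to the chain relation; it avoids the Hausdorff-distance limit and the bookkeeping with the finite partitions $\mathcal C_\delta(f)$, at the price of having to verify closedness and forward-closedness of the sets $E(\cdot)$ (which you do correctly, including the small-perturbation trick showing $E(x)=\bigcap_{\delta>0}\overline{\{y:x\to_\delta y\}}$). The interchange is exactly the duality you note at the end: a $\le$-maximal component is the same thing as a minimal closed forward-closed set, and the paper's route is the ``computational'' alternative you sketch in your last sentence. Both arguments are complete; yours is arguably the more self-contained of the two.
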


\begin{proof}
For every $\delta>0$, define an order $\le_\delta$ on $\mathcal{C}_\delta(f)$ by for any $A,B\in\mathcal{C}_\delta(f)$, $A\le_\delta B$ iff there is $(x,y)\in A\times B$ such that $x\rightarrow_\delta y$. Also, define an order $\le$ on $\mathcal{C}(f)$ by for any $E,F\in\mathcal{C}(f)$, $E\le F$ iff there is $(x,y)\in E\times F$ such that $x\rightarrow y$. We show that every chain $R$ in $\mathcal{C}(f)$ has an upper bound in $\mathcal{C}(f)$ (with respect to $\le$).

Given any chain $R$ in $\mathcal{C}(f)$ and $\delta>0$, let
\[
R_\delta=\{A\in\mathcal{C}_\delta(f)\colon\exists E\in R\:\:\text{s.t.}\:\: E\subset A\}.
\]
Then, $R_\delta$ is a chain in $\mathcal{C}_\delta(f)$ with respect to $\le_\delta$. Since $\mathcal{C}_\delta(f)$ is a finite set and so is $R_\delta$, there is a maximal element $A_\delta$ of $R_\delta$ with respect to $\le_\delta$. Choose a sequence $0<\delta_1>\delta_2>\cdots\to0$ and a closed subset $Y$ of $X$ with
\[
\lim_{n\to\infty}d_H(A_{\delta_n},Y)=0,
\]
where $d_H$ is the Hausdorff distance. Since $A_{\delta_n}\subset CR(f)$ for all $n\ge1$, we have $Y\subset CR(f)$. Since $x\rightarrow_{\delta_n}y$ for any $n\ge1$ and $x,y\in A_{\delta_n}$, we easily see that $x\rightarrow y$ for all $x,y\in Y$, so there is $E\in\mathcal{C}(f)$ with $Y\subset E$. Given any $F\in R$ and  $n\ge1$, we have $F\subset B_n$ for some $B_n\in R_{\delta_n}$. By the maximality of $A_{\delta_n}$ in $R_{\delta_n}$, we obtain $B_n\le_{\delta_n}A_{\delta_n}$, so there is $(x_n,y_n)\in B_n\times A_{\delta_n}$ with $x_n\rightarrow_{\delta_n}y_n$. Note that $B_1\supset B_2\supset\cdots$ and
\[
F=\bigcap_{n\ge1}B_n.
\]
It follows that $x\rightarrow y$ for some
\[
(x,y)\in F\times Y\subset F\times E,
\]
so $F\le E$. Since $F\in R$ is arbitrary, $E$ is the desired upper bound of $R$.

Let us continue the proof. For any $x\in X$, there is $y\in CR(f)$ such that $x\rightarrow y$. Take $D\in\mathcal{C}(f)$ with $y\in D$ and consider a subset
\[
\mathcal{S}=\{F\in\mathcal{C}(f)\colon D\le F\}
\]
of $\mathcal{C}(f)$. By what is shown above, Zorn's lemma gives a maximal element $C$ of $\mathcal{S}$ with respect to $\le$. Since $D\le C$, we easily see that for every $\delta>0$, there is a $\delta$-chain $(x_i)_{i=0}^k$ of $f$ with $x_0=x$ and $x_k\in C$. What is left to show is that $C$ is chain stable. If not, then by the compactness of $X$, we have $z\rightarrow w$ for some
\[
(z,w)\in C\times[X\setminus C].
\]
By taking $F\in\mathcal{C}(f)$ with $\omega(w,f)\subset F$, here $\omega(\cdot,f)$ is the $\omega$-limit set, we obtain $C\le F$. If $C\ne F$, this contradicts the maximality of $C$ in $\mathcal{S}$. If $C=F$, we obtain $w\in C$, which is also a contradiction. Thus, $C$ is chain stable, and so the lemma has been proved.
\end{proof}

\begin{cor}
For any continuous map $f\colon X\to X$, $\mathcal{C}^s(f)$ is not the empty set.
\end{cor}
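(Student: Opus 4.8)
The plan is to obtain this as an immediate consequence of Lemma 2.1. Since $X$ is a (nonempty) compact metric space, I would first fix an arbitrary point $x\in X$. Applying Lemma 2.1 to this $x$ produces a chain component $C\in\mathcal{C}^s(f)$ — concretely, one that $x$ can reach by arbitrarily precise $\delta$-chains of $f$. The mere existence of such a $C$ exhibits an element of $\mathcal{C}^s(f)$, so $\mathcal{C}^s(f)\ne\emptyset$, which is precisely the assertion.

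There is essentially no obstacle to overcome: all the substantive work — the Zorn's lemma argument that builds a maximal, and hence chain stable, chain component lying above a prescribed one, together with the compactness/Hausdorff-limit argument identifying the limiting equivalence class — has already been carried out in the proof of Lemma 2.1. The only subtlety worth flagging is that the argument tacitly relies on $X\ne\emptyset$, which is part of the standing convention that $X$ is the compact metric space underlying a dynamical system; without a point $x$ to feed into Lemma 2.1 the conclusion would be vacuously false.
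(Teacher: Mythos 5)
Your proposal is correct and is exactly how the paper intends the corollary to be read: it is stated immediately after Lemma 2.1 with no separate proof, precisely because applying that lemma to any point of the (nonempty) compact space $X$ produces an element of $\mathcal{C}^s(f)$. Your remark about needing $X\ne\emptyset$ is a fair, if minor, point of hygiene.
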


Let $f\colon X\to X$ be continuous map. For a closed $f$-invariant subset $S$ of $X$, we say that $S$ is {\em Lyapunov stable} if for any $\epsilon>0$, there is $\delta>0$ such that every $x\in X$ with $d(x,S)\le\delta$ satisfies $d(f^i(x),S)\le\epsilon$ for all $i\ge0$. If $S$ is chain stable, then it is Lyapunov stable. The stable set $W^s(S,f)$ is defined by
\[
W^s(S,f)=\{x\in X\colon \lim_{i\to\infty}d(f^i(x),S)=0\}.
\]

\begin{lem}
Let $f\colon X\to X$ be continuous map and let $S$ be a closed $f$-invariant subset of $X$. If $S$ is Lyapunov stable, then $W^s(S,f)$ is a $G_\delta$-subset of $X$. 
\end{lem}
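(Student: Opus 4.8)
The plan is to rewrite $W^s(S,f)$ so that the defining condition $d(f^i(x),S)\to0$ — which naively has the form ``$\forall n\,\exists j\,\forall i\ge j$'' and so only presents $W^s(S,f)$ as an $F_{\sigma\delta}$-type set — gets replaced by a condition of the form ``$\forall n\,\exists i$'', which is manifestly a countable intersection of open sets. Lyapunov stability is precisely what makes this collapse work: once an orbit of a point comes close enough to $S$, it must stay close to $S$ forever afterward.

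Concretely, for each $n\ge1$ I would apply the definition of Lyapunov stability with $\epsilon=1/n$ to obtain $\delta_n>0$ such that every $x\in X$ with $d(x,S)\le\delta_n$ satisfies $d(f^i(x),S)\le1/n$ for all $i\ge0$. For $n\ge1$ and $i\ge0$ set
\[
U_{n,i}=\{x\in X\colon d(f^i(x),S)<\delta_n\};
\]
since $f^i$ is continuous and $y\mapsto d(y,S)$ is continuous, $U_{n,i}$ is open, hence so is $U_n=\bigcup_{i\ge0}U_{n,i}$. The claim to prove is then
\[
W^s(S,f)=\bigcap_{n=1}^\infty U_n,
\]
which, being a countable intersection of open sets, is a $G_\delta$-subset of $X$.

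For the inclusion $\subset$: if $x\in W^s(S,f)$ then $d(f^i(x),S)\to0$, so for every $n$ there is $i$ with $d(f^i(x),S)<\delta_n$, i.e.\ $x\in U_n$. For $\supset$: suppose $x\in\bigcap_n U_n$ and fix $n$; then $d(f^{i_n}(x),S)<\delta_n$ for some $i_n\ge0$, so applying the choice of $\delta_n$ to the point $f^{i_n}(x)$, and using $f^{i_n+i}(x)=f^i(f^{i_n}(x))$, gives $d(f^j(x),S)\le1/n$ for all $j\ge i_n$, whence $\limsup_{j\to\infty}d(f^j(x),S)\le1/n$. Letting $n\to\infty$ yields $\lim_{j\to\infty}d(f^j(x),S)=0$, that is $x\in W^s(S,f)$.

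There is no genuine obstacle beyond spotting this reformulation; the only things to watch are that each $\delta_n$ supplied by Lyapunov stability is strictly positive (this is needed for the inclusion $\subset$ and for the $U_{n,i}$ to be the right sets) and that $f$-invariance of $S$ together with continuity of each iterate $f^i$ is what legitimizes treating $x\mapsto d(f^i(x),S)$ as a continuous function. The ``$\supset$'' inclusion is where Lyapunov stability does all the work.
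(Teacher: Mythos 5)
Your proof is correct and follows essentially the same route as the paper: both express $W^s(S,f)$ as $\bigcap_n\bigcup_i(\text{open set})$ and invoke Lyapunov stability to upgrade ``close to $S$ at some time'' to ``close to $S$ ever after.'' The only cosmetic difference is that the paper defines the open sets using the fixed radii $1/n$ and uses the Lyapunov constants $\delta_n$ only in verifying the equality, whereas you build the $\delta_n$ into the sets themselves; the two decompositions are equivalent.
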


\begin{proof}
For any $i\ge0$ and $n\ge1$, let
\[
S(i,n)=\{x\in X\colon d(f^i(x),S)<\frac{1}{n}\},
\] 
an open subset of $X$. Since $S$ is Lyapunov stable, we have
\[
W^s(S,f)=\bigcap_{n=1}^\infty\bigcup_{i=0}^\infty S(i,n),
\]
which is a $G_\delta$-subset of $X$.
\end{proof}

The following two lemmas are mentioned in Section 1.

\begin{lem}
Let $f\colon X\to X$ be a continuous map. If $f$ exhibits $d{\rm C}$, then $f$ is regionally proximal and so is chain proximal.
\end{lem}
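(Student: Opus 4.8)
The plan is to derive regional proximality straight from the density of the scrambled pairs, and then read off chain proximality from Remark 1.4.

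First I would fix an arbitrary pair $(x,y)\in X^2$ together with an arbitrary $\epsilon>0$; the goal is to produce $(z,w)\in X^2$ and $i\ge0$ with $\max\{d(x,z),d(y,w),d(f^i(z),f^i(w))\}\le\epsilon$. Since $f$ exhibits $d{\rm C}$, the set ${\rm C}(X,f)=\bigcup_{\delta>0}{\rm C}_\delta(X,f)$ is dense in $X^2$, so I can pick $(z,w)\in{\rm C}(X,f)$ with $d(x,z)\le\epsilon$ and $d(y,w)\le\epsilon$. By definition $(z,w)$ is $\delta_0$-scrambled for some $\delta_0>0$; the only part of condition (C) I use is the scale-free half $\liminf_{i\to\infty}d(f^i(z),f^i(w))=0$, which furnishes some $i\ge0$ with $d(f^i(z),f^i(w))\le\epsilon$. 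Then $(z,w)$ and $i$ are precisely the witnesses demanded by the definition of regional proximality of $(x,y)$ at scale $\epsilon$. As $(x,y)$ and $\epsilon>0$ were arbitrary, $f$ is regionally proximal.

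Finally, Remark 1.4 says that a regionally proximal map is automatically chain proximal (this implication needs no shadowing hypothesis), so $f$ is chain proximal and the lemma is proved.

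I do not anticipate a genuine obstacle here: the statement is an unwinding of the definitions, the only thing to be careful about being that membership in ${\rm C}(X,f)$ only records the $\limsup\ge\delta_0$ half of (C) for some unspecified $\delta_0$, whereas it is the $\liminf=0$ half, which refers to no scale at all, that actually does the work. If one preferred not to invoke Remark 1.4, one could instead build the required pair of $\delta$-chains by hand — prepend $x$ and $y$ to the forward orbits of $z$ and $w$, using uniform continuity of $f$ to control the first step, and splice the two chains together at the near-coincidence time $i$ — but citing Remark 1.4 is cleaner.
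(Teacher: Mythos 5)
Your argument is correct and is essentially identical to the paper's own proof: both extract a nearby pair $(z,w)\in{\rm C}(X,f)$ from density, use only the $\liminf=0$ half of condition (C) to find the time $i$ of near-coincidence, and then invoke the already-stated implication (Remark 1.4) from regional proximality to chain proximality. Your remark that only the scale-free half of (C) is needed matches exactly what the paper does.
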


\begin{proof}
Since $f$ exhibits $d{\rm C}$, for any $(x,y)\in X^2$ and $\epsilon>0$, there is $(z,w)\in{\rm C}(X,f)$ such that
\[
\max\{d(x,z),d(y,w)\}\le\epsilon.
\]
By the definition of ${\rm C}(X,f)$, we have
\[
\liminf_{i\to\infty}d(f^i(z),f^i(w))=0,
\]
i.e., $(z,w)$ is a proximal pair for $f$, so there is $i\ge0$ with $d(f^i(z),f^i(w))\le\epsilon$. These properties imply that $f$ is regionally proximal.
\end{proof}

\begin{lem}
If a continuous map $f\colon X\to X$ exhibits $du{\rm C}$, then $f$ is sensitive.
\end{lem}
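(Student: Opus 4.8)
The plan is to read off the sensitivity constant directly from the density hypothesis. By definition of $du{\rm C}$, fix $\delta>0$ such that ${\rm C}_\delta(X,f)$ is dense in $X^2$. I claim that $e=\delta/2$ witnesses sensitivity, that is, every $x\in X$ is an $e$-sensitive point for $f$.

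To see this, fix $x\in X$ and $\epsilon>0$ and consider the diagonal point $(x,x)\in X^2$. Since ${\rm C}_\delta(X,f)$ is dense in $X^2$, we may choose a $\delta$-scrambled pair $(y,z)\in{\rm C}_\delta(X,f)$ lying close enough to $(x,x)$ that $\max\{d(x,y),d(x,z)\}\le\epsilon$. By condition $({\rm C})$ in the definition of a $\delta$-scrambled pair,
\[
\limsup_{i\to\infty}d(f^i(y),f^i(z))\ge\delta>\tfrac{\delta}{2}=e,
\]
so there is some $i\ge0$ with $d(f^i(y),f^i(z))>e$. Together with $\max\{d(x,y),d(x,z)\}\le\epsilon$, this is precisely the requirement for $x$ to be an $e$-sensitive point. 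As $x\in X$ was arbitrary, $f$ is sensitive.

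There is essentially no obstacle here once the constant is chosen correctly: the only subtlety is that the $\limsup$ appearing in $({\rm C})$ is only guaranteed to be $\ge\delta$, not $>\delta$, which is why one must take $e$ strictly below $\delta$ (any $e<\delta$, e.g.\ $e=\delta/2$, will do). One could also note in passing that $du{\rm C}$ forces $X$ to be perfect — an isolated point $p$ would make $(p,p)$ a point of $X^2$ that no net of genuinely scrambled pairs can approach — but this observation is not needed for the statement.
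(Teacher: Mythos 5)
Your proof is correct and is essentially identical to the paper's: both fix $\delta$ with ${\rm C}_\delta(X,f)$ dense, approximate the diagonal point $(x,x)$ by a $\delta$-scrambled pair, and use $\limsup_i d(f^i(y),f^i(z))\ge\delta>e$ for a sensitivity constant $e$ chosen strictly below $\delta$. No further comment is needed.
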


\begin{proof}
By the definition of $du{\rm C}$, there is $\delta>0$ such that ${\rm C}_\delta(X,f)$ is a dense subset of $X^2$. Take $0<e<\delta$. Then, for any $x\in X$ and $\epsilon>0$, there is $(y,z)\in{\rm C}_\delta(X,f)$ such that
\[
\max\{d(x,y),d(x,z)\}\le\epsilon.
\]
Since $(y,z)\in{\rm C}_\delta(X,f)$, we have
\[
\limsup_{i\to\infty}d(f^i(y),f^i(z))\ge\delta>e,
\]
so there is $i\ge0$ with $d(f^i(y),f^i(z))>e$. This implies that $x$ is an $e$-sensitive point for $f$. Since $x\in X$ is arbitrary, $f$ is sensitive.
\end{proof}

\section{Proof of the main theorems}

In this section, we prove Theorems 1.1 and 1.2. Let us begin with a definition.

\begin{defi}
\normalfont
A continuous map $f\colon X\to X$ is said to be {\em chain mixing} if for any $\delta>0$, there exists $N>0$ such that for any $x,y\in X$ and $k\ge N$, there is a $\delta$-chain $(x_i)_{i=0}^k$ of $f$ with $(x_0,x_k)=(x,y)$. We say that $f$ is {\em mixing} if for any non-empty open subsets $U$, $V$ of $X$, there is $N>0$ such that $f^i(U)\cap V\ne\emptyset$ holds for any $i\ge N$.
\end{defi}

\begin{rem}
\normalfont
If $f$ is mixing, then it is chain mixing, and the converse holds if $f$ has the shadowing property.
\end{rem}

As shown in \cite{RW}, the following holds.

\begin{lem}
Let $f\colon X \to X$ be a continuous map. If $f$ is chain transitive, then the following properties are equivalent:
\begin{itemize}
\item[(1)] $f$ is chain mixing,
\item[(2)] The product map $f\times f\colon X\times X\to X\times X$ is chain transitive,
\item[(3)] $f$ is chain proximal.
\end{itemize}
\end{lem}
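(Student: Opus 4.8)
We may assume $X\ne\emptyset$, as otherwise all three properties hold vacuously. The plan is to prove the cycle of implications $(1)\Rightarrow(2)\Rightarrow(3)\Rightarrow(1)$; the first two are short, and the real content is $(3)\Rightarrow(1)$.

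For $(1)\Rightarrow(2)$ and $(2)\Rightarrow(3)$, equip $X\times X$ with the maximum metric (the chain notions involved are unaffected by a change to a uniformly equivalent metric), so that a $\delta$-chain of $f\times f$ is exactly a pair of $\delta$-chains of $f$ of the same length. For $(1)\Rightarrow(2)$: given $(a_1,a_2),(b_1,b_2)\in X^2$ and $\delta>0$, chain mixing provides $N$ with $\delta$-chains of length $N$ from $a_j$ to $b_j$ for $j=1,2$, and pairing them gives a $\delta$-chain of $f\times f$ from $(a_1,a_2)$ to $(b_1,b_2)$; thus $f\times f$ is chain transitive. For $(2)\Rightarrow(3)$: given $(x,y)\in X^2$ and $\delta>0$, fix any $z\in X$ and use chain transitivity of $f\times f$ to get a $\delta$-chain of $f\times f$ from $(x,y)$ to $(z,z)$; reading off its two coordinate sequences yields a pair of equal-length $\delta$-chains of $f$ starting at $x$ and at $y$ with common endpoint $z$, so $(x,y)$ is chain proximal.

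For $(3)\Rightarrow(1)$ I would argue by contraposition: assume $f$ is chain transitive but not chain mixing, and exhibit a pair that is not chain proximal. The engine is the set $N_\delta(p,p)$ of lengths of $\delta$-cycles of $f$ at a base point $p$; it is nonempty since $f$ is chain transitive, and closed under addition by concatenating cycles, hence a subsemigroup of $(\mathbb Z_{>0},+)$. I claim that, for chain transitive $f$, chain mixing is equivalent to $\gcd N_\delta(p,p)=1$ for every $\delta>0$ and every $p\in X$. Granting this, since $f$ is not chain mixing we may fix $\delta>0$ and $p\in X$ with $d:=\gcd N_\delta(p,p)\ge2$, so that $N_\delta(p,p)\subseteq d\mathbb Z$. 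Then chain transitivity makes
\[
r(x):=\bigl(\text{length of some }\delta\text{-chain from }p\text{ to }x\bigr)\bmod d
\]
a well-defined map $r\colon X\to\mathbb Z/d\mathbb Z$: completing two $\delta$-chains from $p$ to $x$ by a fixed $\delta$-chain from $x$ to $p$ yields two $\delta$-cycles at $p$, both of length in $N_\delta(p,p)\subseteq d\mathbb Z$, so the two original lengths agree modulo $d$. By construction $r$ is additive along $\delta$-chains, i.e.\ $r(x_\ell)=r(x_0)+\ell$ for every $\delta$-chain $(x_0,\dots,x_\ell)$; in particular $r(f(p))=r(p)+1\ne r(p)$. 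Now $(p,f(p))$ is not a chain proximal pair: a pair of $\delta$-chains of common length $\ell$ starting at $p$ and at $f(p)$ would have endpoints with $r$-values $\ell$ and $\ell+1\bmod d$ (prepend the $1$-chain $(p,f(p))$ to the second one and use additivity), and these differ since $d\ge2$, so the endpoints cannot coincide. Hence $f$ is not chain proximal.

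The main obstacle is the claimed characterization of chain mixing by $\gcd N_\delta(p,p)=1$; the nontrivial direction (gcd always $1$ forces chain mixing) needs a chain-length estimate uniform in the endpoints, which is where compactness enters. I would prove it by a finite-model argument: using the modulus of uniform continuity of $f$, fix a finite $\eta$-net of $X$ with $\eta$ small relative to the given $\delta$, and form the finite directed graph on the net with an edge from one net point to another when the image of the first lies within a suitably chosen threshold of the second. Then walks in this graph are $\delta$-chains of $f$; every $\gamma$-chain of $f$ with $\gamma$ small enough projects, via nearest net points, to a walk in the graph; and chain transitivity of $f$ makes the graph strongly connected. If the graph had period $e\ge2$, the projection property would force every $\gamma$-cycle of $f$ at a net point $p$ (with $\gamma$ small) to have length divisible by $e$, whence $\gcd N_\gamma(p,p)\ge e\ge2$, contrary to the hypothesis; so the graph is aperiodic. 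The classical fact that a strongly connected aperiodic finite digraph admits walks of every sufficiently large length between any two vertices then yields, via the walk-to-chain property, $\delta$-chains of every large length between any two points of $X$, i.e.\ chain mixing. The remainder of $(3)\Rightarrow(1)$ is routine bookkeeping with the semigroup of return-chain-lengths.
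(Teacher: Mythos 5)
Your proposal is correct. Note that the paper does not actually prove this lemma: it is quoted from Richeson and Wiseman \cite{RW}, so there is no in-paper argument to compare against line by line. Your two easy implications $(1)\Rightarrow(2)\Rightarrow(3)$ are exactly the standard ones (with the harmless normalization to the max metric on $X\times X$). For $(3)\Rightarrow(1)$ you have reconstructed, in essence, the Richeson--Wiseman mechanism: your residue map $r\colon X\to\mathbb{Z}/d\mathbb{Z}$ with $r(f(x))=r(x)+1$ is precisely their factor of a non-chain-mixing chain transitive map onto a cyclic rotation on $d\ge2$ points, and the obstruction to chain proximality of $(p,f(p))$ is that the two coordinates of any equal-length pair of $\delta$-chains land in distinct fibers of $r$. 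The supporting claim that, for chain transitive $f$, chain mixing is equivalent to $\gcd N_\delta(p,p)=1$ for all $\delta$ and $p$ is also standard, and your finite $\eta$-net digraph model (strong connectivity from chain transitivity, aperiodicity from the gcd hypothesis, then the primitivity of strongly connected aperiodic digraphs) is the usual way to get the uniform length $N$; the only places left implicit are the bookkeeping choices of thresholds ($\omega(\eta)+2\eta\le\delta$ so that graph walks are $\delta$-chains and $\gamma$-chains project to walks) and the prepending/appending needed to pass from chains between net points to chains between arbitrary points of $X$, both of which are routine. So the proposal is a complete and correct substitute for the cited result, at the cost of being somewhat longer than the one-line citation the paper uses.
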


Recall that for any continuous map $f\colon X\to X$, $\mathcal{C}^s(f)$ denotes the set of chain stable $C\in\mathcal{C}(f)$. Then, the following lemma characterizes the chain proximality in terms of the chain components. Combined with Lemma 2.1, it describes a global structure of chains for chain proximal maps. 

\begin{lem}
For any continuous map $f\colon X\to X$, the following properties are equivalent:
\begin{itemize}
\item[(1)] $f$ is chain proximal,
\item[(2)] $\mathcal{C}^s(f)$ is a singleton, and $f|_{C}\colon C\to C$ is chain mixing for the unique $C\in\mathcal{C}^s(f)$.
\end{itemize}
\end{lem}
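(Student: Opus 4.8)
The plan is to prove the equivalence by establishing $(2)\Rightarrow(1)$ and $(1)\Rightarrow(2)$ separately; throughout I would use Lemma 2.1 and Corollary 2.2 to pin down the chain stable components, and Lemma 3.1 to pass between chain proximality and chain mixing on a chain transitive set.

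For $(2)\Rightarrow(1)$: assume $\mathcal{C}^s(f)=\{C\}$ with $f|_C$ chain mixing, and fix $(x,y)\in X^2$ and $\delta>0$. Applying Lemma 2.1 to $x$ and to $y$, and using that $C$ is the \emph{unique} chain stable component, I get $\delta$-chains $(p_i)_{i=0}^{k}$ from $x$ with $p_k\in C$ and $(q_i)_{i=0}^{l}$ from $y$ with $q_l\in C$. Fix $c\in C$, let $N$ be the chain mixing constant of $f|_C$ for scale $\delta$, and pick any $M\ge N$. Chain mixing of $f|_C$ supplies a $\delta$-chain of $f|_C$ of length $l+M$ from $p_k$ to $c$ and a $\delta$-chain of $f|_C$ of length $k+M$ from $q_l$ to $c$. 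Concatenating these with $(p_i)$ and $(q_i)$ respectively yields a pair of $\delta$-chains of $f$, both of length $k+l+M$, one from $x$ and one from $y$, both ending at the single point $c$. As $\delta$ and $(x,y)$ were arbitrary, $f$ is chain proximal.

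For $(1)\Rightarrow(2)$: by Corollary 2.2, $\mathcal{C}^s(f)\ne\emptyset$. To see it is a singleton, suppose $C_1\ne C_2$ are both chain stable; being distinct chain components they are disjoint closed subsets of the compact space $X$, so $r:=d(C_1,C_2)>0$. Let $\delta>0$ simultaneously witness chain stability of $C_1$ and of $C_2$ at scale $r/3$, pick $x\in C_1$ and $y\in C_2$, and apply chain proximality of $f$: the $\delta$-chains from $x$ and from $y$ reach a common point $z$, which by chain stability satisfies $d(z,C_1)\le r/3$ and $d(z,C_2)\le r/3$, forcing $d(C_1,C_2)<r$, a contradiction. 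Hence $\mathcal{C}^s(f)=\{C\}$ with $C$ chain stable. Finally, $f|_C$ is chain transitive, so by Lemma 3.1 it is enough to show $f|_C$ is chain proximal. Given $x,y\in C$ and $\delta>0$: by uniform continuity choose $\epsilon>0$ with $\omega(\epsilon)+2\epsilon\le\delta$ for a modulus of continuity $\omega$ of $f$, and then $\delta_1\le\epsilon$ so that every $\delta_1$-chain of $f$ starting in $C$ stays in the $\epsilon$-neighborhood of $C$ (chain stability of $C$). Applying chain proximality of $f$ at scale $\delta_1$ gives $\delta_1$-chains $(a_i)_{i=0}^{k}$, $(b_i)_{i=0}^{k}$ with $a_0=x$, $b_0=y$, $a_k=b_k=:z$, and both stay $\epsilon$-close to $C$. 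Now pick a \emph{single} point $z'\in C$ with $d(z,z')\le\epsilon$, and for $1\le i\le k-1$ choose $a_i',b_i'\in C$ with $d(a_i,a_i')\le\epsilon$ and $d(b_i,b_i')\le\epsilon$, and set $a_0'=x$, $b_0'=y$, $a_k'=b_k'=z'$. A routine triangle-inequality estimate gives $d(f(a_i'),a_{i+1}')\le\omega(\epsilon)+\delta_1+\epsilon\le\delta$, and likewise for $b'$, so $(a_i')_{i=0}^{k}$ and $(b_i')_{i=0}^{k}$ are $\delta$-chains of $f|_C$ from $x$ and from $y$ ending at the common point $z'\in C$. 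Thus $f|_C$ is chain proximal, hence chain mixing.

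I expect the main obstacle to be this last step of $(1)\Rightarrow(2)$: chain proximality of $f$ on $X$ only supplies chains in $X$ sharing an endpoint, whereas we need chains that stay \emph{inside} $C$, still share an endpoint, and have equal length. Chain stability of $C$ confines the given chains to a small neighborhood of $C$, and the trick is to perturb all their points into $C$ while collapsing \emph{both} final points to one and the same $z'\in C$; since the original chains had equal length and a common final point $z$, the perturbed chains retain equal length and acquire the common final point $z'$. Everything else --- the concatenation bookkeeping in $(2)\Rightarrow(1)$ and the $\epsilon$-$\delta$ estimates --- is routine, and the passage from chain proximality of $f|_C$ to its chain mixing is handed to us by Lemma 3.1.
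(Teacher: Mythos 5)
Your proof is correct and follows essentially the same route as the paper's: uniqueness of the chain stable component via chain stability versus chain proximality, Lemma 3.1 to convert chain proximality of $f|_C$ into chain mixing, and concatenation of chains into $C$ with chains inside $C$ for the converse. You merely supply details the paper elides --- the explicit perturbation of near-$C$ chains into $C$ (which the paper dismisses with ``we easily see''), and the length-matching of the two concatenated chains via the chain mixing constant $N$ (the paper silently assumes equal lengths) --- so there is nothing to correct.
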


\begin{proof}
The implication $(1)\Rightarrow(2)$: By Corollary 2.1, there is one or more chain stable chain components for $f$. Suppose that $C,D\in C(f)$ with $C\ne D$ are chain stable, and fix $(x,y)\in C\times D$. Then, there are $\epsilon>0$ and $\delta>0$ such that
\[
\inf\{d(z,w)\colon z\in C,w\in D\}>3\epsilon,
\]
and any pair
\[
((x_i)_{i=0}^k,(y_i)_{i=0}^k)
\]
of $\delta$-chains of $f$ with $(x_0,y_0)=(x,y)$ satisfies
\[
\max\{d(x_k,C),d(y_k,D)\}\le\epsilon,
\]
so $d(x_k,y_k)>\epsilon$. This contradicts that $f$ is chain proximal; therefore, there is a unique chain stable chain component $C\in\mathcal{C}(f)$, i.e., $\mathcal{C}^s(f)=\{C\}$, a singleton. By the chain proximality of $f$ and the chain stability of $C$, we easily see that $f|_C$ is chain proximal. Since $f|_C$ is chain transitive, by Lemma 3.1, it is chain mixing.
$ $\newline
$(2)\Rightarrow(1)$: Since $\mathcal{C}^s(f)=\{C\}$, for any $(x,y)\in X^2$ and $\delta>0$, by Lemma 2.1, there is a pair
\[
(\alpha_1,\beta_1)=((x_i)_{i=0}^k,(y_i)_{i=0}^k)
\]
of $\delta$-chains of $f$ with $(x_0,y_0)=(x,y)$ and $\{x_k,y_k\}\subset C$. Since $f|_C$ is chain mixing, for a sufficiently large $l>0$, we have a pair
\[
(\alpha_2,\beta_2)=((z_i)_{i=0}^l,(w_i)_{i=0}^l)
\]
of $\delta$-chains of $f$ with $(z_0,w_0)=(x_k,y_k)$ and $z_l=w_l$. By concatenating them, we obtain a pair
\[
(\alpha_3,\beta_3)=(\alpha_1\alpha_2,\beta_1\beta_2)=((u_i)_{i=0}^{k+l},(v_i)_{i=0}^{k+l})
\]
of $\delta$-chains of $f$ with $(u_0,v_0)=(x,y)$ and $u_{k+l}=v_{k+l}$. This implies that $(x,y)$ is a proximal pair for $f$. Since $(x,y)\in X^2$ is arbitrary, $f$ is chain proximal.
\end{proof}

The following lemma states that the properties (1) and (2) in Theorem 1.1 are equivalent for general continuous maps.

\begin{lem}
For any continuous map $f\colon X\to X$, the following properties are equivalent:
\begin{itemize}
\item[(1)] $f$ is chain sensitive and chain proximal,
\item[(2)] $f$ is chain proximal, and the unique $C\in\mathcal{C}^s(f)$ is not a singleton.
\end{itemize}
\end{lem}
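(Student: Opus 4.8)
The plan is to use Lemma 3.2 to reduce both conditions to statements about the single chain stable chain component $C\in\mathcal{C}^s(f)$, on which $f|_C$ is chain mixing, and then to show that chain sensitivity of $f$ is equivalent to $C$ not being a singleton.

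First I would handle the easy implication $(1)\Rightarrow(2)$ contrapositively: if the unique $C\in\mathcal{C}^s(f)$ is a singleton $\{p\}$, I claim $f$ is not chain sensitive. Fix any $e>0$. By chain stability of $\{p\}$, there is $\delta>0$ such that every $\delta$-chain starting at $p$ stays within $e/3$ of $p$; and by Lemma 2.1, for any $x\in X$ there is a $\delta'$-chain from $x$ into $C=\{p\}$. The subtlety is that a chain $e$-sensitive point requires a \emph{pair} of $\delta$-chains from the \emph{same} point $x$ with endpoints far apart, so I need to argue that both legs of such a pair are eventually trapped near $p$. Taking $\delta$ small enough that the $\delta$-chain from $x$ reaches $p$ in, say, $m$ steps, and then noting that after reaching $p$ both continuations are $\delta$-chains from $p$ and hence stay within $e/3$ of $p$, the two endpoints are within $2e/3<e$ of each other — so no $x$ can be chain $e$-sensitive. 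Here one must be slightly careful: Lemma 2.1 only gives a chain \emph{into} $C$, not one that ends at $p$ and then is free to continue; but since $C=\{p\}$ is a single point and $f(p)=p$, reaching $C$ means landing exactly at $p$, after which the argument applies. Since $e>0$ was arbitrary, $f$ is not chain sensitive.

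For $(2)\Rightarrow(1)$, assume $f$ is chain proximal with $\mathcal{C}^s(f)=\{C\}$ and $C$ not a singleton. By Lemma 3.2, $f|_C$ is chain mixing; since $C$ is not a singleton, $C$ contains two distinct points $a,b$ with $d(a,b)=:3e>0$. I must show every $x\in X$ is a chain $e$-sensitive point. Fix $x$ and $\delta>0$. By Lemma 2.1 there is a $\delta$-chain from $x$ to some point $c\in C$. By chain mixing of $f|_C$ (applied twice, to get chains of a common length $k\ge N$ from $c$ to $a$ and from $c$ to $b$ inside $C$), I can build two $\delta$-chains of equal length from $c$, one ending at a point within $\delta$ of $a$ and the other within $\delta$ of $b$ — hence ending at points more than $e$ apart once $\delta<e/3$. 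Concatenating the common initial chain from $x$ to $c$ with these two continuations yields a pair of $\delta$-chains starting at $x$ whose endpoints are more than $e$ apart, i.e., $x$ is chain $e$-sensitive. A minor point is matching the lengths: the initial chain from $x$ to $c$ has some fixed length $m$, and the two continuations from $c$ need equal length, which chain mixing supplies for all large $k$; so the two resulting chains both have length $m+k$.

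The main obstacle is the $(1)\Rightarrow(2)$ direction, specifically the bookkeeping needed to show that a pair of $\delta$-chains emanating from the same point is eventually confined near the singleton $C=\{p\}$: one must combine Lemma 2.1 (reachability of $C$ by arbitrarily fine chains) with chain stability in a way that controls \emph{both} legs simultaneously and over their entire length, not merely their endpoints. Once that confinement is established, the contradiction with chain $e$-sensitivity is immediate. The $(2)\Rightarrow(1)$ direction is essentially a routine concatenation argument once Lemma 3.2 is invoked.
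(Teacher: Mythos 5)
Your $(2)\Rightarrow(1)$ direction is correct and is essentially the paper's argument: chain from $x$ into $C$ via Lemma 2.1, then use chain mixing of $f|_C$ to produce two equal-length continuations landing near two points of $C$ at definite distance, and concatenate. The problem is in $(1)\Rightarrow(2)$, where you try to prove far more than is needed, and the stronger statement you aim at is actually false. You want to show that when $C=\{p\}$ no point $x\in X$ is chain $e$-sensitive, by arguing that \emph{both} legs of an arbitrary pair of $\delta$-chains from $x$ are "eventually trapped near $p$." But the chains witnessing chain sensitivity of $x$ are arbitrary; nothing forces them to follow the particular chain into $C$ supplied by Lemma 2.1. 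Concretely, take $X=[0,1]$ and $f(t)=t^2$: here $f$ is chain proximal, $\mathcal{C}^s(f)=\{\{0\}\}$ is a singleton component, yet the point $1$ \emph{is} a chain $\tfrac12$-sensitive point, since for every $\delta>0$ one $\delta$-chain from $1$ stays at the fixed point $1$ while another steps down to $1-\delta$ and descends toward $0$. So the confinement you call "the main obstacle" cannot be established for general $x$, and the contrapositive as you set it up does not go through.

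The repair is to notice that chain sensitivity requires \emph{every} point to be chain $e$-sensitive for a single $e>0$, so it suffices to exhibit one point that is not chain $e$-sensitive for any $e$. That point is $p$ itself: for any $\epsilon>0$, chain stability of $C=\{p\}$ gives $\delta>0$ such that every $\delta$-chain starting at $p$ stays within $\epsilon$ of $p$, hence any pair of $\delta$-chains from $p$ has endpoints within $2\epsilon$ of each other, and $p$ is not chain $2\epsilon$-sensitive. Since $\epsilon$ was arbitrary, $f$ is not chain sensitive. This is exactly what the paper does, and it avoids Lemma 2.1 entirely in this direction.
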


\begin{proof}
The implication $(1)\Rightarrow(2)$: Since $f$ is chain proximal, by Lemma 3.2, there is a unique $C\in\mathcal{C}^s(f)$. If $C=\{x\}$, a singleton, then for any $\epsilon>0$, the chain stability of $C$ gives $\delta>0$ for which every $\delta$-chain $(x_i)_{i=0}^k$ of $f$ with $x_0=x$ satisfies $d(x_i,x)\le\epsilon$ for all $0\le i\le k$, implying that $x$ is not a chain $2\epsilon$-sensitive point for $f$. This contradicts that $f$ is chain sensitive. Thus, $C$ is not a singleton.   
$ $\newline
$(2)\Rightarrow(1)$:
Fix $0<e<{\rm diam}\:C$. For any $x\in X$ and $\delta>0$, Lemma 2.1 gives a $\delta$-chain $\alpha=(x_i)_{i=0}^k$ of $f$ with $x_0=x$ and $x_k\in C$. By Lemma 3.2, $f|_C$ is chain mixing, so for any sufficiently large $l>0$, we have a pair
\[
(\beta_1,\beta_2)=((y_i)_{i=0}^l,(z_i)_{i=0}^l)
\]
of $\delta$-chains of $f|_C$ with $y_0=z_0=x_k$ and $d(y_l,z_l)>e$. Then, a pair
\[
(\alpha_1,\alpha_2)=(\alpha\beta_1,\alpha\beta_2)=((u_i)_{i=0}^{k+l},(v_i)_{i=0}^{k+l})
\]
of $\delta$-chains of $f$ satisfies $u_0=v_0=x$ and $d(u_{k+l},v_{k+l})>e$. This implies that $x$ is a chain $e$-sensitive point for $f$. Since $x\in X$ is arbitrary, $f$ is chain sensitive.
\end{proof}

Although the next lemma is shown in \cite{Mu}, we give a proof of it for completeness.

\begin{lem}
If a continuous map $f\colon X\to X$ is sensitive and regionally proximal, then $f$ exhibits $gu{\rm C}$.
\end{lem}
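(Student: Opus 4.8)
The plan is to fix $e>0$ to be a sensitivity constant for $f$ and to prove the stronger assertion that ${\rm C}_{e/2}(X,f)$ is residual in $X^2$; this is exactly what $gu{\rm C}$ means. Write ${\rm C}_{e/2}(X,f)=A\cap B$ where
\[
A=\Big\{(x,y):\liminf_{i\to\infty}d(f^i(x),f^i(y))=0\Big\},\qquad B=\Big\{(x,y):\limsup_{i\to\infty}d(f^i(x),f^i(y))\ge\tfrac{e}{2}\Big\}.
\]
Both $A$ and $B$ are $G_\delta$: indeed $A=\bigcap_{n\ge1}\bigcap_{j\ge0}\bigcup_{i\ge j}\{(x,y):d(f^i(x),f^i(y))<1/n\}$ and $B=\bigcap_{m\ge1}\bigcap_{j\ge0}\bigcup_{i\ge j}\{(x,y):d(f^i(x),f^i(y))>\tfrac{e}{2}-1/m\}$, and every displayed inner union is open in $X^2$ since $(x,y)\mapsto d(f^i(x),f^i(y))$ is continuous. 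Because $X^2$ is a compact metric, hence Baire, space, it therefore suffices to prove that each of these inner unions is dense; then $A$ and $B$ are residual, and so is ${\rm C}_{e/2}(X,f)=A\cap B$.

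The two density statements rest on ``robust'' forms of the hypotheses. First, \emph{robust sensitivity}: for every $x\in X$, $\epsilon>0$ and $j\ge0$ there are $y,z$ with $d(x,y),d(x,z)\le\epsilon$ and some $i\ge j$ with $d(f^i(y),f^i(z))>e$. If this failed, then near $x$ every separation exceeding $e$ would occur at a time $<j$, and uniform continuity of $f$ on the compact space $X$ (hence of $f^i$ for each $i<j$) would force $d(f^i(u),f^i(v))\le e$ for all $i\ge0$ whenever $u,v$ lie in a small enough ball about $x$, contradicting that $x$ is an $e$-sensitive point. Second, \emph{robust regional proximality}: for every $(x,y)\in X^2$, $\epsilon>0$ and $j\ge0$ there are $z,w$ with $d(x,z),d(y,w)\le\epsilon$ and some $i\ge j$ with $d(f^i(z),f^i(w))\le\epsilon$. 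To prove it, apply regional proximality of $(x,y)$ with $1/n$ in place of $\epsilon$ to obtain pairs $(z_n,w_n)\to(x,y)$ and times $i_n\ge0$ with $d(f^{i_n}(z_n),f^{i_n}(w_n))\le1/n$; if $\{i_n\}$ is unbounded a suitable subsequence gives the claim directly, while if $\{i_n\}$ is bounded some value $i^\ast$ recurs along a subsequence, so that $f^{i^\ast}(x)=f^{i^\ast}(y)$ by continuity and $(x,y)$ itself works for every $i\ge\max\{j,i^\ast\}$.

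Granting these, density is immediate. For $A$: given a basic open box $W_1\times W_2$ and indices $n,j$, pick any $(x,y)\in W_1\times W_2$ and apply robust regional proximality to land on $(z,w)\in W_1\times W_2$ with $d(f^i(z),f^i(w))\le 1/(2n)<1/n$ for some $i\ge j$. For $B$: given $W_1\times W_2$ and $m,j$, fix $x\in W_1$, $y\in W_2$ and $\epsilon>0$ such that the $\epsilon$-ball about $x$ lies in $W_1$; robust sensitivity at $x$ gives $z',z''$ in that ball and $i\ge j$ with $d(f^i(z'),f^i(z''))>e$, so by the triangle inequality $\max\{d(f^i(z'),f^i(y)),d(f^i(z''),f^i(y))\}>e/2$, and whichever of $(z',y),(z'',y)$ realizes this lies in $W_1\times W_2$ and witnesses $d(f^i(\cdot),f^i(\cdot))>e/2>\tfrac{e}{2}-1/m$ at time $i\ge j$. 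This gives the two density claims and hence the lemma.

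The step I expect to be the main obstacle is that $f$ is not assumed open, so one cannot freely perturb an image point $f^i(z)$; in particular the naive recursive construction of a scrambled pair by alternately invoking regional proximality and sensitivity ``at late iterates'' does not go through. The robustness statements above are precisely what circumvents this, and their proofs carry the real content — especially the dichotomy in the regional-proximality lemma, where bounded return times get converted (via continuity) into an eventually coinciding pair. A secondary point is that sensitivity only separates pairs located near a single point, whereas density in $X^2$ requires separating points of $W_1$ from points of $W_2$; the triangle-inequality step bridges this gap and is responsible for the constant $e/2$ rather than $e$.
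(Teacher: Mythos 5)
Your proof is correct and follows essentially the same route as the paper: the same $G_\delta$ decomposition of ${\rm C}_{e/2}(X,f)$ into the liminf and limsup parts, with density of the inner unions obtained from sensitivity and regional proximality, and the same triangle-inequality step producing the constant $e/2$. The only difference is that you spell out the ``robust'' (time-shifted) versions of the two hypotheses, which the paper dismisses with ``we easily see''; your arguments for them are sound.
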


\begin{proof}
As in the proof of Lemma 1.1, for $\delta>0$, we express ${\rm C}_\delta(X,f)$ as $S_\delta\cap T$ where
\[
S_\delta=\bigcap_{n=1}^\infty\bigcap_{j=0}^\infty\bigcup_{i=j}^\infty S_\delta(i,n)\quad\text{and}\quad T=\bigcap_{n=1}^\infty\bigcap_{j=0}^\infty\bigcup_{i=j}^\infty T(i,n).
\]
Take $e>0$ for which every $x\in X$ is an $e$-sensitive point for $f$. Then, we easily see that for any $j\ge0$, $(x,y)\in X^2$, and $\epsilon>0$, there are $(z,w)\in X^2$ and $i\ge j$ such that
\[
\max\{d(x,z),d(y,w)\}\le\epsilon
\]
and $d(f^i(z),f^i(w))>e/2$. This implies that for any $n\ge 1$, $\bigcup_{i=j}^\infty S_{e/2}(i,n)$, $j\ge0$, are open and dense subsets of $X^2$, and so $S_{e/2}$ is a dense $G_\delta$-subset of $X^2$. On the other hand, since $f$ is regionally proximal,  for any $j\ge0$, $(x,y)\in X^2$, and $\epsilon>0$, there are $(z,w)\in X^2$ and $i\ge j$ such that
\[
\max\{d(x,z),d(y,w),d(f^i(z),f^i(w))\}\le\epsilon.
\]
It follows that for any $n\ge1$, $\bigcup_{i=j}^\infty T(i,n)$, $j\ge0$, are open and dense subsets of $X^2$, and so $T$ is a dense $G_\delta$-subset of $X^2$.
Thus, ${\rm C}_{e/2}(X,f)$ is a dense $G_\delta$-subset of $X^2$, proving that $f$ exhibits $guC$.
\end{proof}

Under the assumption of the shadowing property, the next lemma extracts from $g{\rm C}$ a property of  the unique chain stable chain component: not a singleton. 

\begin{lem}
Let $f\colon X\to X$ be a continuous map with shadowing property. If $f$ exhibits $g{\rm C}$, then $f$ is chain proximal, and the unique $C\in\mathcal{C}^s(f)$ is not a singleton.
\end{lem}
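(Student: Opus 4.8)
The plan is to first settle the ``chain proximal'' assertion together with the structural picture, and then prove the non-singleton claim by contradiction. Since $g{\rm C}$ implies $d{\rm C}$, Lemma 2.3 gives that $f$ is regionally proximal, hence chain proximal, so by Lemma 3.2 we have $\mathcal{C}^s(f)=\{C\}$ for a single chain component $C$ and $f|_C$ is chain mixing. It remains to show that $C$ is not a singleton. Suppose, for contradiction, that $C=\{p\}$. Since $C$ is $f$-invariant, $f(p)=p$, and since $C$ is chain stable it is Lyapunov stable.

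The decisive step is to prove that $W^s(\{p\},f)$ is a residual subset of $X$. First, by Lemma 2.1 together with $\mathcal{C}^s(f)=\{\{p\}\}$, every $x\in X$ satisfies $x\rightarrow_\delta p$ for all $\delta>0$. Now fix $x\in X$, $n\ge1$, and a radius $r>0$; let $\delta_n>0$ be a Lyapunov modulus for $p$ relative to $1/n$ (so $d(z,p)\le\delta_n$ forces $d(f^i(z),p)\le 1/n$ for all $i\ge0$), choose $0<\epsilon<\min\{\delta_n,r\}$, and let $\delta>0$ be a shadowing constant for this $\epsilon$. Take a $\delta$-chain from $x$ to $p$ and extend it by the constant sequence $p,p,\dots$ (which is legitimate because $p$ is fixed); this is a $\delta$-pseudo orbit, hence $\epsilon$-shadowed by some $y\in X$. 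Then $d(x,y)\le\epsilon<r$ and $d(f^k(y),p)\le\epsilon<\delta_n$ for the length $k$ of the chain, so $d(f^i(y),p)\le 1/n$ for all $i\ge k$. Consequently each open set $G_n=\{y\in X:d(f^N(y),p)<\delta_n\text{ for some }N\ge0\}$ is dense, and one checks that $W^s(\{p\},f)=\bigcap_{n\ge1}G_n$; since $W^s(\{p\},f)$ is a dense $G_\delta$ set (directly, or by Lemma 2.2, $\{p\}$ being Lyapunov stable) in the compact, hence Baire, space $X$, it is residual.

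Finally, $W^s(\{p\},f)\times W^s(\{p\},f)$ is residual in $X^2$, so it meets the residual set ${\rm C}(X,f)$. Any pair $(x,y)$ in the intersection has $f^i(x)\to p$ and $f^i(y)\to p$, whence $\limsup_{i\to\infty}d(f^i(x),f^i(y))=0$; but $(x,y)\in{\rm C}_\delta(X,f)$ for some $\delta>0$ forces $\limsup_{i\to\infty}d(f^i(x),f^i(y))\ge\delta>0$, a contradiction. Hence $C$ is not a singleton.

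The main obstacle is the density of $W^s(\{p\},f)$: one must calibrate the shadowing precision $\epsilon$ against the Lyapunov modulus of $p$ so that, once the shadowing point catches up with the constant tail of the pseudo orbit, it is trapped in an arbitrarily small neighbourhood of $p$. It is worth emphasising that ordinary shadowing (no limit shadowing) suffices here precisely because $p$ is a genuine fixed point, so that a finite $\delta$-chain ending at $p$ extends to a genuine $\delta$-pseudo orbit.
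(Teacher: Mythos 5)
Your proposal is correct and follows essentially the same route as the paper: use Lemma 2.3 for chain proximality, then use shadowing together with Lemma 2.1 and the Lyapunov stability of $C$ to show that $W^s(C,f)$ is a dense $G_\delta$, and finally intersect $W^s(C,f)\times W^s(C,f)$ with the residual set ${\rm C}(X,f)$ via Baire category. The only cosmetic difference is that you argue by contradiction from $C=\{p\}$ (so that points of $W^s$ are asymptotic to a fixed point and cannot form scrambled pairs), whereas the paper works with a general $C$ and extracts two distinct limit points $a\ne b$ in $C$ from a scrambled pair in $W^s(C,f)^2$.
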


\begin{proof}
Since $g{\rm C}$ implies $d{\rm C}$, by Lemma 2.3, $f$ is chain proximal. It remains to prove that the unique $C\in\mathcal{C}^s(f)$ is not a singleton. By the definition of $g{\rm C}$, ${\rm C}(X,f)$ is a residual subset of $X^2$. Since $C$ is chain stable and so Lyapunov stable, as in the proof of  Lemma 2.2, we have
\[
W^s(C,f)=\bigcap_{n=1}^\infty\bigcup_{i=0}^\infty S(i,n),
\]
where 
\[
S(i,n)=\{x\in X\colon d(f^i(x),C)<\frac{1}{n}\}
\] 
for all $i\ge0$ and $n\ge1$. Since $f$ has the shadowing property, for any $\epsilon>0$, there is $\delta>0$ such that every $\delta$-pseudo orbit of $f$ is $\epsilon$-shadowed by some point of $X$. Then, for any $x\in X$, Lemma 2.1 gives a $\delta$-chain $(x_i)_{i=0}^k$ of $f$ with $x_0=x$ and $x_k\in C$. Consider a $\delta$-pseudo orbit
\[
\xi=(x_0,x_1,\dots,x_k,f(x_k),f^2(x_k),\dots)
\] 
of $f$, which is $\epsilon$-shadowed by $y\in X$, and note that $y$ satisfies
\[
\max\{d(x,y),d(f^k(y),C)\}\le\epsilon.
\]
Since $\epsilon>0$ is arbitrary, these properties clearly imply that $\bigcup_{i=0}^\infty S(i,n)$, $n\ge1$, are open and dense subsets of $X$, so $W^s(C,f)$ is a dense $G_\delta$-subset of $X$. It follows that $W^s(C,f)\times W^s(C,f)$ is a dense $G_\delta$-subset of $X^2$, so
\[
{\rm C}(X,f)\cap[W^s(C,f)\times W^s(C,f)]
\]
is a residual subset of $X^2$. Take any
\[
(z,w)\in{\rm C}(X,f)\cap[W^s(C,f)\times W^s(C,f)].
\]
Since $(z,w)\in{\rm C}(X,f)$, we have
\[
\limsup_{i\to\infty}d(f^i(z),f^i(w))\ge\delta
\]
for some $\delta>0$, so there are a sequence $0\le i_1<i_2<\cdots$ and $(a,b)\in X^2$ such that
\[
\limsup_{j\to\infty}d(f^{i_j}(z),f^{i_j}(w))\ge\delta
\]
and $\lim_{j\to\infty}(f^{i_j}(z),f^{i_j}(w))=(a,b)$. Then, $d(a,b)\ge\delta$ and so $a\ne b$. By $(z,w)\in W^s(C,f)\times W^s(C,f)$, we obtain $(a,b)\in C\times C$; therefore, $C$ is not a singleton, completing the proof.
\end{proof}

Let us complete the proof of Theorem 1.1. 

\begin{proof}[Proof of Theorem 1.1]
By Lemma 3.3, (1) and (2) are equivalent. Due to Lemma 1.1, (3) and (4) are equivalent. By Lemma 3.4, (1) implies (3). The property (3) implies (5), and by Lemma 3.5, (5) implies (2); therefore, the theorem has been proved.
\end{proof}
 
We proceed to the proof of Theorem 1.2. The first lemma states the equivalence of the properties (1) and (2) in Theorem 1.2 for general continuous maps.

\begin{lem}
For any continuous map $f\colon X\to X$, the following properties are equivalent:
\begin{itemize}
\item[(1)] $f$ satisfies the property S and is chain proximal,
\item[(2)] $f$ is chain proximal, and the unique $C\in\mathcal{C}^s(f)$ contains a distal pair for $f|_C$.
\end{itemize}
\end{lem}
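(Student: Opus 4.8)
The plan is to use the structural description in Lemma 3.2 to reduce both implications to statements about $f|_C$. Since both (1) and (2) presuppose that $f$ is chain proximal, Lemma 3.2 yields $\mathcal{C}^s(f)=\{C\}$ and tells us that $f|_C$ is chain mixing; the remaining content is thus the equivalence, for a chain proximal $f$, of the property S with the existence of a distal pair for $f|_C$ inside $C$.

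For $(2)\Rightarrow(1)$, I would fix a distal pair $(a,b)\in C\times C$ for $f|_C$, say with $e_0=\inf_{i\ge0}d(f^i(a),f^i(b))>0$, and show that property S holds with $e=e_0/2$. Given any $(x,y)\in X^2$ and $\delta>0$, Lemma 2.1 provides $\delta$-chains of $f$ from $x$ into $C$ and from $y$ into $C$. Using the chain mixing of $f|_C$, I would (i) lengthen these two chains inside $C$ so that they have a common length $K$, ending at some common point of $C$; (ii) continue them inside $C$ so that at some later common time they arrive at $a$ and at $b$, respectively; and (iii) append the genuine orbits of $a$ and of $b$. Concatenating produces a pair of $\delta$-pseudo orbits of $f$ starting at $(x,y)$ whose distance is eventually $\ge e_0$, so $\liminf_{i\to\infty}d(x_i,y_i)\ge e_0>e$. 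The only care needed is the bookkeeping of chain lengths so that the two pseudo orbits stay synchronized, which is routine given chain mixing; crucially $e_0$ depends only on $(a,b)$, not on $(x,y)$ or $\delta$, as property S demands.

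For $(1)\Rightarrow(2)$, suppose property S holds with a constant $e>0$, and fix $p\in C$. Choose $\delta_n\downarrow0$ and apply property S to the pair $(p,p)$ to obtain $\delta_n$-pseudo orbits $(x^n_i)_{i\ge0}$, $(y^n_i)_{i\ge0}$ of $f$ with $x^n_0=y^n_0=p$ and $d(x^n_i,y^n_i)>e$ for all $i\ge M_n$ (some $M_n$). Since $C$ is chain stable, there is $\epsilon_n\to0$ such that any $\delta_n$-chain of $f$ starting in $C$ stays within $\epsilon_n$ of $C$; applying this to finite initial segments shows $d(x^n_i,C)\le\epsilon_n$ and $d(y^n_i,C)\le\epsilon_n$ for all $i$. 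Now shift by $M_n$, and by compactness of $X$ pass to a subsequence and diagonalize so that $x^{n_k}_{M_{n_k}+j}\to a_j$ and $y^{n_k}_{M_{n_k}+j}\to b_j$ for every $j\ge0$. The $\delta_{n_k}$-pseudo orbit condition forces $a_{j+1}=f(a_j)$ and $b_{j+1}=f(b_j)$, hence $a_j=f^j(a_0)$ and $b_j=f^j(b_0)$; continuity of $d(\cdot,C)$ together with $\epsilon_{n_k}\to0$ and closedness of $C$ forces $a_0,b_0\in C$; and $d(x^{n_k}_{M_{n_k}+j},y^{n_k}_{M_{n_k}+j})>e$ forces $d(f^j(a_0),f^j(b_0))\ge e$ for all $j\ge0$. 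Thus $(a_0,b_0)\in C\times C$ is a distal pair for $f|_C$.

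The main obstacle is the direction $(1)\Rightarrow(2)$: property S only delivers pseudo orbits in $X$, so the real work is converting a sequence of increasingly accurate pseudo-orbit pairs that merely \emph{remain near} $C$ into a genuine orbit pair \emph{inside} $C$ that stays $e$-separated for all time. The chain stability of $C$ (which confines the pseudo orbits to a shrinking neighborhood of $C$) combined with a diagonal compactness extraction is precisely what makes this conversion go through; once it is in place, the rest is a straightforward limiting computation.
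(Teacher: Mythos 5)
Your proof is correct. The direction $(2)\Rightarrow(1)$ is essentially identical to the paper's: fix a distal pair in $C$, run $\delta$-chains from $(x,y)$ into $C$, synchronize and route to the distal pair via chain mixing of $f|_C$ (Lemma 3.2), and append the genuine orbits; the separation constant depends only on the distal pair, as required. The direction $(1)\Rightarrow(2)$ is where you genuinely diverge. The paper argues by contradiction: if $f|_C$ were proximal, compactness upgrades this to a uniform statement ($\inf_{0\le i\le N}d(f^i(x),f^i(y))\le\epsilon$ on all of $C^2$ for some finite $N$), which by uniform continuity transfers to $\delta$-chains of length $N$ in $C$, and then chain stability lets one project any sufficiently fine pseudo-orbit pair starting in $C^2$ onto $C$ and force $\liminf_{i}d(x_i,y_i)\le4\epsilon$, contradicting property S. You instead construct the distal pair directly: apply property S at $(p,p)$ with $\delta_n\downarrow0$, use chain stability to confine the resulting pseudo orbits to shrinking neighborhoods of $C$, shift past the time after which the separation exceeds $e$, and take a diagonal compactness limit, which yields genuine orbits in $C$ that stay $e$-separated forever. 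Both arguments are sound; yours avoids the uniform-proximality compactness step and produces a quantitative conclusion (a pair in $C^2$ with $\inf_{j}d(f^j(a_0),f^j(b_0))\ge e$ for the very constant $e$ of property S), at the cost of a slightly heavier diagonal extraction, whereas the paper's contrapositive is shorter once the uniformization is granted.
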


\begin{proof}
The implication $(1)\Rightarrow(2)$: Since $f$ is chain proximal, by Lemma 3.2, there is a unique $C\in\mathcal{C}^s(f)$. If $f|_C$ is proximal, then for any $\epsilon>0$, there is $N>0$ such that
\[
\inf_{0\le i\le N}d(f^i(x),f^i(y))\le\epsilon
\]
for all $(x,y)\in C^2$. Then, for a sufficiently small $\delta>0$, we have
\[
\inf_{0\le i\le N}d(z_i,w_i)\le2\epsilon
\]
for any pair 
\[
((z_i)_{i=0}^N,(w_i)_{i=0}^N)
\]
of $\delta$-chains of $f|_{C}$. Fix any $(x,y)\in C^2$. Since $C$ is chain stable, if $\eta>0$ is sufficiently small, then for every pair
\[
((x_i)_{i=0}^\infty,(y_i)_{i=0}^\infty)
\]
of $\eta$-pseudo orbits of $f$ with $(x_0,y_0)=(x,y)$, by taking $(z_i,w_i)\in C^2$ with $(z_0,w_0)=(x,y)$, $d(x_i,z_i)=d(x_i,C)\le\epsilon$, and $d(y_i,w_i)=d(y_i,C)\le\epsilon$ for all $i>0$, we obtain a pair 
\[
((z_i)_{i=0}^\infty,(w_i)_{i=0}^\infty)
\]
of $\delta$-pseudo orbits of $f|_C$. Then, we see that
\[
\liminf_{i\to\infty}d(x_i,y_i)\le\liminf_{i\to\infty}d(z_i,w_i)+2\epsilon\le4\epsilon.
\]
This contradicts that $f$ satisfies the property S. Thus, $f|_{C}\colon C\to C$ is not proximal, or equivalently, $C$ contains a distal pair for $f|_C$. 
$ $\newline
$(2)\Rightarrow(1)$:
Fix a distal pair $(z,w)\in C^2$ for $f|_C$ and $e>0$ with
\[
\inf_{i\ge0}d(f^i(z),f^i(w))>e.
\]
For any $(x,y)\in X^2$ and $\delta>0$, Lemma 2.1 gives a
pair
\[
(\alpha_1,\beta_1)=((x_i)_{i=0}^k,(y_i)_{i=0}^k)
\]
of $\delta$-chains of $f$ with $(x_0,y_0)=(x,y)$ and $\{x_k,y_k\}\subset C$. By Lemma 3.2, $f|_C$ is chain mixing, so for a sufficiently large $l>0$, we have a pair
\[
(\alpha_2,\beta_2)=((z_i)_{i=0}^l,(w_i)_{i=0}^l)
\]
of $\delta$-chains of $f|_C$ with $(z_0,w_0)=(x_k,y_k)$ and $(z_l,w_l)=(z,w)$. Put
\[
(\alpha_3,\beta_3)=((z,f(z),f^2(z),\dots),(w,f(w),f^2(w),\dots))
\]
and consider a pair
\[
(\alpha,\beta)=(\alpha_1\alpha_2\alpha_3,\beta_1\beta_2\beta_3)=((u_i)_{i=0}^\infty,(v_i)_{i=0}^\infty)
\]
of $\delta$-pseudo orbits of $f$. Then, we have $(u_0,v_0)=(x,y)$ and
\[
\liminf_{i\to\infty}d(u_i,v_i)\ge\inf_{i\ge0}d(f^i(z),f^i(w))>e.
\]
Since $(x,y)\in X^2$ and $\delta>0$ are arbitrary, we conclude that $f$ satisfies the property S.
\end{proof}

The next lemma concerns the implication $(1)\Rightarrow(3)$ in Theorem 1.2.

\begin{lem}
Let $f\colon X\to X$  be a continuous map with the shadowing property. If $f$ has the property S and is chain proximal, then $f$ exhibits $gu{\rm DC1}$.
\end{lem}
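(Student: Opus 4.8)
The plan is to let $e>0$ witness the property S, put $\delta=e/2$, and show directly that ${\rm DC1}_\delta(X,f)$ is residual in $X^2$ --- which is precisely $gu{\rm DC1}$. Exactly as in the proof of Lemma 1.1, one has ${\rm DC1}_\delta(X,f)=U_\delta\cap V$, where $U_\delta=\bigcap_{m\ge1}\bigcap_{j\ge1}\bigcup_{n\ge j}U_\delta(m,n)$ and $V=\bigcap_{l\ge1}\bigcap_{m\ge1}\bigcap_{j\ge1}\bigcup_{n\ge j}V(l,m,n)$ with $U_\delta(m,n),V(l,m,n)$ the open subsets of $X^2$ defined there; so each $\bigcup_{n\ge j}U_\delta(m,n)$ and each $\bigcup_{n\ge j}V(l,m,n)$ is open, and by the Baire category theorem it suffices to prove that all of these countably many open sets are dense in $X^2$. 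The main obstacle is recognizing that one should argue this way rather than by producing a single DC1-$\delta$-scrambled pair near each point of $X^2$: the latter is blocked by the fixed-precision nature of shadowing --- shadowing a pseudo orbit at precision $\epsilon'$ cannot force two orbits closer than $2\epsilon'$, while the clause ``$F^\ast_{xy}(t)=1$ for all $t>0$'' requires returning within arbitrarily small $t$ --- whereas in the Baire reduction the ``eventual separation'' condition (carried by $U_\delta$) and the ``return within $1/l$ on a set of upper density $1$'' condition for each $l$ (carried by $V$) may be met by \emph{different} pairs near a given point, at a precision adapted to $l$.

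For the density of $\bigcup_{n\ge j}U_\delta(m,n)$, given $(p,q)\in X^2$ and $\rho>0$ I would fix $\rho'$ with $0<\rho'<\min\{\rho,e/4\}$, choose $\delta_0>0$ so that every $\delta_0$-pseudo orbit is $\rho'$-shadowed, and apply the property S to $(p,q)$ with this $\delta_0$ to obtain a pair of $\delta_0$-pseudo orbits $((\xi_i)_{i\ge0},(\eta_i)_{i\ge0})$ with $(\xi_0,\eta_0)=(p,q)$ and $\liminf_{i\to\infty}d(\xi_i,\eta_i)>e$. Shadowing these by $x',y'\in X$ gives $d(p,x'),d(q,y')\le\rho'<\rho$ together with $d(f^i(x'),f^i(y'))>e-2\rho'>\delta$ for all large $i$, whence $(x',y')\in U_\delta(m,n)$ once $n\ge j$ is large enough, so $(x',y')\in\bigcup_{n\ge j}U_\delta(m,n)$.

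For the density of $\bigcup_{n\ge j}V(l,m,n)$, given $(p,q)\in X^2$ and $\rho>0$ I would fix $\rho'$ with $0<\rho'<\min\{\rho,1/(2l)\}$, choose $\delta_0>0$ so that every $\delta_0$-pseudo orbit is $\rho'$-shadowed, and apply the chain proximality to $(p,q)$ with this $\delta_0$ to obtain a pair of finite $\delta_0$-chains from $(p,q)$ to a common endpoint $(r,r)$; extending both by the genuine orbit of $r$ yields $\delta_0$-pseudo orbits $((\xi_i)_{i\ge0},(\eta_i)_{i\ge0})$ that coincide from some index $k$ on. Shadowing these by $x',y'$ gives $d(p,x'),d(q,y')\le\rho'<\rho$ and $d(f^i(x'),f^i(y'))\le2\rho'<1/l$ for all $i\ge k$, whence $(x',y')\in V(l,m,n)$ once $n\ge j$ is large enough, so $(x',y')\in\bigcup_{n\ge j}V(l,m,n)$. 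Since every open set in the decomposition of ${\rm DC1}_\delta(X,f)$ is thus dense in $X^2$, the Baire category theorem gives that ${\rm DC1}_\delta(X,f)$ is residual, i.e.\ $f$ exhibits $gu{\rm DC1}$; I note as a consistency check that the property S and the shadowing property already force $X$ to have no isolated point.
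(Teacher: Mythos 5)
Your proof is correct and takes essentially the same route as the paper: the same decomposition ${\rm DC1}_\delta(X,f)=U_\delta\cap V$ from Lemma~1.1, with shadowing plus the property S giving density of the $U_\delta$-layers and shadowing plus chain proximality giving density of the $V$-layers. The only difference is that you spell out the two density verifications which the paper compresses into one line each via the auxiliary sets $D^\delta(X,f)$ and $A^{1/l}(X,f)$.
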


\begin{proof}
We apply an argument in \cite{LLT}. As in the proof of Lemma 1.1, for $\delta>0$, express ${\rm DC1}_\delta(X,f)$ as $U_\delta\cap V$ where
\[
U_\delta=\bigcap_{m=1}^\infty\bigcap_{j=1}^\infty\bigcup_{n=j}^\infty U_\delta(m,n)\quad\text{and}\quad V=\bigcap_{l=1}^\infty\bigcap_{m=1}^\infty\bigcap_{j=1}^\infty\bigcup_{n=j}^\infty V(l,m,n).
\]
Take $e>0$ as in the definition of property S and fix $0<\delta<e$. Put
\[
D^\delta(X,f)=\{(x,y)\in X^2\colon\liminf_{i\to\infty}d(f^i(x),f^i(y))>\delta\},
\]
and
\[
A^\sigma(X,f)=\{(x,y)\in X^2\colon\limsup_{i\to\infty}d(f^i(x),f^i(y))<\sigma\}
\]
for $\sigma>0$. Note that
\[
D^\delta(X,f)\subset\cup_{n=j}^\infty U_\delta(m,n)
\quad
\text{and}
\quad
A^{\frac{1}{l}}(X,f)\subset\cup_{n=j}^\infty V(l,m,n)
\]
for all $l,m,j\ge1$.

Since $f$ has the shadowing property, by the property S of $f$, $D^\delta(X,f)$ is dense in $X^2$, so $U_\delta$ is a dense $G_\delta$-subset of $X^2$. Similarly, by the shadowing property and the chain proximality of  $f$, $A^{\frac{1}{l}}(X,f)$ is dense in $X^2$ for all $l\ge1$, so $V$ is a dense $G_\delta$-subset of $X^2$. Thus, ${\rm DC1}_\delta(X,f)$ is a dense $G_\delta$-subset of $X^2$, proving that $f$ exhibits $gu{\rm DC1}$.
\end{proof}

Under the assumption of the shadowing property, the next lemma extracts from $g{\rm DC1}$ a property of  the unique chain stable chain component: the existence of a distal pair.

\begin{lem}
Let $f\colon X\to X$ be a continuous map with shadowing property. If $f$ exhibits $g{\rm DC1}$, then $f$ is chain proximal, and the unique $C\in\mathcal{C}^s(f)$ contains a distal pair for $f|_C$.
\end{lem}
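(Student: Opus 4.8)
The plan is to combine the argument of Lemma 3.5 with a counting estimate extracted from the defining condition $F_{zw}(\delta)=0$ of a DC1-$\delta$-scrambled pair. First, since $g{\rm DC1}$ implies $g{\rm C}$ (Remark 1.1), Lemma 3.5 gives that $f$ is chain proximal, hence by Lemma 3.2 that $\mathcal{C}^s(f)$ is a singleton, say $\mathcal{C}^s(f)=\{C\}$. Thus it only remains to exhibit a distal pair for $f|_C$ inside $C$.

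Next, exactly as in the proof of Lemma 3.5, I would show that $W^s(C,f)$ is a dense $G_\delta$-subset of $X$: it is Lyapunov stable (being chain stable), hence $G_\delta$ by Lemma 2.2, and its density follows from the shadowing property together with Lemma 2.1, since every $x\in X$ reaches $C$ by arbitrarily fine $\delta$-chains, which, extended by the true orbit of the endpoint, are $\epsilon$-shadowed by some $y$ near $x$ with $d(f^k(y),C)\le\epsilon$. Consequently $W^s(C,f)\times W^s(C,f)$ is residual in $X^2$, and intersecting it with the residual set ${\rm DC1}(X,f)$ we obtain a pair $(z,w)$ with $z,w\in W^s(C,f)$ that is DC1-$\delta$-scrambled for some $\delta>0$; in particular $F_{zw}(\delta)=0$.

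The heart of the argument is to show that $f|_C$ is not proximal, which I would do by contradiction. If $f|_C$ were proximal, then for each $N\ge1$ the set $\{(a,b)\in C^2:\min_{0\le j\le N}d(f^j(a),f^j(b))<\delta/4\}$ is open in $C^2$, and as $N$ varies these sets cover $C^2$; by compactness one of them equals $C^2$, so there is $N$ with $\min_{0\le j\le N}d(f^j(a),f^j(b))<\delta/4$ for all $(a,b)\in C^2$. Since $d(f^i(z),C)\to0$ and $d(f^i(w),C)\to0$ as $i\to\infty$ and $f^0,\dots,f^N$ are uniformly continuous, for all sufficiently large $i$ one finds, choosing points of $C$ nearest to $f^i(z)$ and $f^i(w)$, some $j\in\{0,\dots,N\}$ with $d(f^{i+j}(z),f^{i+j}(w))<\delta$. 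Hence there is $M$ such that each of the disjoint blocks $[M+k(N+1),\,M+k(N+1)+N]$, $k\ge0$, contains an index $i$ with $d(f^i(z),f^i(w))<\delta$, which forces
\[
\liminf_{n\to\infty}\frac{1}{n}|\{0\le i\le n-1\colon d(f^i(z),f^i(w))<\delta\}|\ge\frac{1}{N+1}>0,
\]
contradicting $F_{zw}(\delta)=0$. Therefore $f|_C$ is not proximal, i.e.\ $C$ contains a distal pair for $f|_C$.

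I expect the only delicate point to be this last step: obtaining the uniform version of proximality on $C$ from compactness, and transferring it to the orbits of $z$ and $w$, which merely approach $C$; the rest is a transcription of the proof of Lemma 3.5 together with the elementary density count above.
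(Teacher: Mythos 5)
Your proposal is correct. The first two thirds coincide with the paper's proof: chain proximality from $d\mathrm{C}$, the residuality of $W^s(C,f)\times W^s(C,f)$ via shadowing and Lemma 2.1, and the choice of a pair $(z,w)$ in ${\rm DC1}(X,f)\cap[W^s(C,f)\times W^s(C,f)]$ with $F_{zw}(\delta)=0$. Where you diverge is the final step. The paper argues directly: from $F_{zw}(\delta)=0$ it extracts arbitrarily long blocks $[i_j,i_j+N_j]$ on which $d(f^i(z),f^i(w))\ge\delta$ (if no such blocks existed, the lower density of close times would be positive), passes to a limit point $(a,b)$ of $(f^{i_j}(z),f^{i_j}(w))$, and observes that $\inf_{i\ge0}d(f^i(a),f^i(b))\ge\delta$ and $(a,b)\in C\times C$ because $z,w\in W^s(C,f)$ and $C$ is closed. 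You instead argue by contradiction: assuming $f|_C$ proximal, you get a uniform $N$ by compactness (the same compactness step the paper uses inside Lemma 3.6), transfer it to the orbits of $z$ and $w$ via uniform continuity of $f^0,\dots,f^N$, and derive $F_{zw}(\delta)\ge 1/(N+1)>0$. The two arguments are essentially contrapositives of one another; yours needs the uniform-continuity transfer from $C$ to nearby orbits, while the paper's needs only that limit points of orbits in $W^s(C,f)$ lie in $C$. Both are complete, and your counting estimate is carried out correctly.
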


\begin{proof}
Since $g{\rm DC1}$ implies $d{\rm C}$, by Lemma 2.3, $f$ is chain proximal. It remains to prove that the unique $C\in\mathcal{C}^s(f)$ contains a distal pair for $f|_C$. By the definition of $g{\rm DC1}$, ${\rm DC1}(X,f)$ is a residual subset of $X^2$. Similarly as in the proof of Lemma 3.5, $W^s(C,f)\times W^s(C,f)$ is a dense $G_\delta$-subset of $X^2$, so
\[
{\rm DC1}(X,f)\cap[W^s(C,f)\times W^s(C,f)]
\]
is a residual subset of $X^2$. Take any
\[
(z,w)\in{\rm DC1}(X,f)\cap[W^s(C,f)\times W^s(C,f)].
\]
Since $(z,w)\in{\rm DC1}(X,f)$, we have $F_{zw}(\delta)=0$ for some $\delta>0$, so there are sequences $0\le i_1<i_2<\cdots$, $0<N_1<N_2<\cdots$, and $(a,b)\in X^2$ such that
\[
\inf_{i_j\le i\le i_j+N_j}d(f^i(z),f^i(w))\ge\delta
\]
for all $j\ge1$, and $\lim_{j\to\infty}(f^{i_j}(z),f^{i_j}(w))=(a,b)$. If follows that
\[
\inf_{i\ge0}d(f^i(a),f^i(b))\ge\delta.
\]
By $(z,w)\in W^s(C,f)\times W^s(C,f)$, we obtain $(a,b)\in C\times C$; therefore, $C$ contains a distal pair for $f|_C$, proving the lemma. 
\end{proof}

As the final proof of this section, we complete the proof of Theorem 1.2.

\begin{proof}[Proof of Theorem 1.2]
By Lemma 3.6, (1) and  (2) are equivalent. Due to Lemma 1.1, (3) and (4) are equivalent. By Lemma 3.7, (1) implies (3). The property (3) implies (5), and by Lemma 3.8, (5) implies (2), thus the theorem has been proved.
\end{proof}

\section{Examples}

In this section, we give five examples illustrating the main theorems. We begin with a definition.

\begin{defi}
\normalfont
Let $f\colon X\to X$ be a continuous map and let $\xi=(x_i)_{i\ge0}$ be a sequence of points in $X$. We call $\xi$ a {\em limit-pseudo orbit} of $f$ if
\[
\lim_{i\to\infty}d(f(x_i),x_{i+1})=0.
\]
We say that $\xi$ is {\em limit-shadowed} by $x\in X$ if
\[
\lim_{i\to\infty}d(f^i(x),x_i)=0.
\]
Then, $f$ is said to have the {\em limit-shadowing property} if every limit-pseudo orbit $\xi$ of $f$ is limit-shadowed by some $x\in X$. 
\end{defi}

\begin{rem}
\normalfont
For example, any subshift of finite type (over finite alphabets) is positively expansive and has the shadowing property, so has the limit-shadowing property (see \cite{AH,BGO}).
\end{rem}

\begin{lem}
Let $X=\{0,1\}^\mathbb{N}$ and let $\sigma\colon X\to X$ be the shift map. Let
\[
X_\infty=\{0^\infty,10^\infty\}\cup\{0^m10^\infty\colon m\ge1\},
\]
a closed $\sigma$-invariant subset of $X$. For any $x\in X$, if $x\in W^s(X_\infty,\sigma)$, then
\[
\lim_{n\to\infty}\frac{1}{n}|\{0\le i\le n-1\colon d(\sigma^i(x),0^\infty)<\epsilon\}|=1
\]
for all $\epsilon>0$.
\end{lem}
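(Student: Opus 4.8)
The plan is to exploit the rigid combinatorial structure of $X_\infty$: every point of $X_\infty$ has at most one coordinate equal to $1$. Hence whenever $\sigma^i(x)$ lies close to $X_\infty$, a long initial block of $\sigma^i(x)$ --- that is, a long block $x_i,x_{i+1},\dots$ of the original sequence --- contains at most one symbol $1$. This forces the $1$'s appearing in $x$ beyond some index to be spaced far apart, so they have upper density zero, and then a short counting argument finishes the proof. For the setup: given $\epsilon>0$, fix an integer $k\ge1$ such that every $u\in X$ with $u_0=\cdots=u_{k-1}=0$ satisfies $d(u,0^\infty)<\epsilon$ (possible since the cylinders $[0^k]$ form a neighbourhood basis at $0^\infty$), and call an index $i\ge0$ \emph{good} if $x_i=x_{i+1}=\cdots=x_{i+k-1}=0$. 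Every good $i$ has $d(\sigma^i(x),0^\infty)<\epsilon$, so it suffices to show the good indices have density $1$.

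Now fix an arbitrary integer $\ell\ge1$. By compactness of $X$, there is $\eta>0$ such that $d(u,X_\infty)<\eta$ implies that $u$ agrees on its first $\ell$ coordinates with some point of $X_\infty$, and therefore $u_0,\dots,u_{\ell-1}$ contains at most one $1$. Since $x\in W^s(X_\infty,\sigma)$, choose $N$ with $d(\sigma^i(x),X_\infty)<\eta$ for all $i\ge N$; then for every $i\ge N$ the block $x_i,\dots,x_{i+\ell-1}$ contains at most one $1$. In particular, if $j,j'\ge N$ are distinct and $x_j=x_{j'}=1$, then $|j-j'|\ge\ell$, so $|\{N\le j\le m:x_j=1\}|\le m/\ell+1$ for every $m$, whence $|\{0\le j\le m:x_j=1\}|\le N+m/\ell+1$.

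To finish, observe that an index $i\in\{0,\dots,n-1\}$ fails to be good exactly when $x_j=1$ for some $j\in\{i,\dots,i+k-1\}\subset\{0,\dots,n+k-2\}$, and each such $j$ spoils at most $k$ values of $i$; hence
\[
|\{0\le i\le n-1:i\ \text{not good}\}|\le k\,|\{0\le j\le n+k-2:x_j=1\}|\le k\Bigl(N+\tfrac{n+k}{\ell}+1\Bigr).
\]
Dividing by $n$ and letting $n\to\infty$ yields $\limsup_{n\to\infty}\frac1n|\{0\le i\le n-1:i\ \text{not good}\}|\le k/\ell$. As $\ell$ was arbitrary (and affects only $N$, which lives in the vanishing $O(1/n)$ term), this $\limsup$ is $0$, so the good indices --- hence the indices $i$ with $d(\sigma^i(x),0^\infty)<\epsilon$ --- have density $1$, which is the claim.

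The argument is essentially bookkeeping; the only point demanding care is the arrangement of quantifiers, namely that $k$ depends on $\epsilon$ alone while $\eta$ and $N$ may depend on the auxiliary parameter $\ell$, together with the observation that this $N$-dependence disappears in the limit $n\to\infty$. I would state this explicitly before beginning the estimate.
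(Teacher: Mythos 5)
Your proof is correct and follows essentially the same route as the paper's: both rest on the observation that proximity of the tail of the orbit to $X_\infty$ forces the gaps between successive $1$'s in $x$ to tend to infinity, followed by a block-counting argument showing the all-zero $k$-blocks have density $1$. The paper states the gap divergence and the final counting as brief assertions ("we see that $i_{j+1}-i_j\to\infty$", "this clearly implies the conclusion"); you have simply written out those details explicitly and merged the paper's finite/infinite case split into one uniform estimate.
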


\begin{proof}
Given any $x=(x_i)_{i\ge1}\in X$ with $\lim_{i\to\infty}d(\sigma^i(x),X_\infty)=0$, let
\[
S=\{i\ge1\colon x_i=1\}.
\]
If $S$ is a finite set, then $\sigma^i(x)=0^\infty$ for any sufficiently large $i\ge0$, so the conclusion of the lemma clearly holds. Suppose that $S$ is an infinite set, and let
\[
S=\{i_j\colon j\ge1\},
\]
where $i_1<i_2<\cdots$. Then, we see that $i_{j+1}-i_j\to\infty$ as $j\to\infty$, so for any $k\ge1$,
\[
\lim_{n\to\infty}\frac{1}{n}|\{0\le i\le n-1\colon x_{i+1}x_{i+2}\cdots x_{i+k}=0^k\}|=1.
\]
This clearly implies the conclusion of the lemma. 
\end{proof}

The first example gives a simple proximal continuous map which exhibits $gu{\rm C}$ but has no DC2-$\delta$-scrambled pair for any $\delta>0$.

\begin{ex}
\normalfont
Let $X=\{0,1\}^\mathbb{N}$ and $\sigma\colon X\to X$ be the shift map. We put
\begin{equation*}
\hat{a}=
\begin{cases}
0&\text{if $a=0$}\\
10&\text{if $a=1$}
\end{cases}
\end{equation*}
and define $\pi\colon X\to X$ by $\pi(x)=\hat{x_1}\hat{x_2}\hat{x_3}\cdots$ for all $x=(x_i)_{i\ge1}\in X$. Put $f=\sigma\circ\pi\colon X\to X$ and
\[
X_\infty=\{0^\infty,10^\infty\}\cup\{0^m10^\infty\colon m\ge1\}\subset X.
\]

We have $\sigma(X_\infty)=f(X_\infty)=X_\infty$ and $\sigma|_{X_\infty}=f|_{X_\infty}$. By this, it is easy to see that $X_\infty\subset CR(f)$. Since $CR(f)\subset\bigcap_{i\ge0}f^i(X)$ and $\bigcap_{i\ge0}f^i(X)\subset X_\infty$, we obtain $CR(f)=X_\infty$. If $f$ has a distal pair, then $CR(f)$ should contain a distal pair for $f$. Since $f|_{X_\infty}\colon X_\infty\to X_\infty$ is proximal, it follows that $f$ is proximal and so regionally proximal.

On the other hand, we see that $f$ is {\em positively expansive}, i.e., there exists $e>0$ such that any $x,y\in X$ with $x\ne y$ satisfies $d(f^i(x),f^i(y))>e$ for some $i\ge0$, so because $X$ is perfect, $f$ is sensitive. Since $f$ is sensitive and regionally proximal, by Lemma 3.4, $f$ exhibits $gu{\rm C}$.

Assume that $(x,y)\in{\rm DC2}_\delta(X,f)$ for some $\delta>0$. Then, since
\[
\lim_{i\to\infty}d(f^i(x),X_\infty)=\lim_{i\to\infty}d(f^i(y),X_\infty)=0,
\]
by taking $x_i,y_i\in X_\infty$, $i\ge0$, with $d(f^i(x),x_i)=d(f^i(x),X_\infty)$ and $d(f^i(y),y_i)=d(f^i(y),X_\infty)$, we obtain limit-pseudo orbits $\xi=(x_i)_{i\ge0},\xi'=(y_i)_{i\ge0}$ of $f|_{X_\infty}$. Note that because $\sigma|_{X_\infty}=f|_{X_\infty}$, $\xi$ and $\xi'$ are also limit-pseudo orbits of $\sigma|_{X_\infty}$. Since $\sigma$ has the limit-shadowing property, they are limit-shadowed by $z$ and $w$, respectively. By the choice of $\xi$ and $\xi'$, we obtain
\[
(z,w)\in W^s(X_\infty,\sigma)\times W^s(X_\infty,\sigma)
\]
and $(z,w)\in{\rm DC2}(X,\sigma)$, which contradict Lemma 4.1, thus ${\rm DC2}(X,f)=\emptyset$.
\end{ex}
 
The next example is a modification of Example 4.1. In particular, it shows that $gu{\rm C}$ and the property (2) in Theorem 1.2 do not necessarily imply $gu{\rm DC1}$ in absence of the shadowing property.

\begin{ex}
\normalfont
Let $X=\{0,1\}^\mathbb{N}$ and $f\colon X\to X$ be the map defined in Example 4.1. Put
\[
\phi(x)=1+\sum_{i\ge1}2^{-i}x_i
\]
for all $x=(x_i)_{i\ge1}\in X$. Note that $\phi(0^\infty)=1$ and $\phi(X)\subset[1,2]$. We define $F\colon X\times[0,1]\to X\times[0,1]$ by
\[
F(x,t)=(f(x),t^{\phi(x)})
\]
for all $(x,t)\in X\times[0,1]$. Note that
\[
F^i(x,t)=(f^i(x),t^{\phi(x)\phi(f(x))\cdots\phi(f^{i-1}(x))})
\]
for all $(x,t)\in X\times[0,1]$ and $i\ge1$.

Since $f$ is sensitive, $F$ is also sensitive. Putting
\[
Y=\{x=(x_i)_{i\ge1}\in X\colon\text{$\{i\ge1\colon x_i=1\}$ is an infinite set}\},
\]
we see that
\[
\lim_{i\to\infty}d(F^i(x,t),X\times\{0\})=0
\]
for all $(x,t)\in Y\times[0,1)$. Since $f$ is proximal, it follows that $F|_{Y\times[0,1)}$ is proximal. Since
\[
X\times[0,1]=\overline{Y\times[0,1)},
\]
we conclude that $F$ is regionally proximal. Thus, by Lemma 3.4, $F$ exhibits $gu{\rm C}$.

Note that  
\[
\lim_{i\to\infty}d(F^i(x,t),X\times\{a(x,t)\})=0
\]
holds for all $(x,t)\in X\times[0,1]$ for some $a(x,t)\in[0,1]$. For any $p=(x,t),q=(y,s)\in X\times[0,1]$, if $a(p)\ne a(q)$, then $(p,q)$ is a distal pair for $F$. If $a(p)=a(q)$, then
\[
(p,q)\not\in{\rm DC1}(X\times[0,1],F)
\]
because ${\rm DC1}(X,f)=\emptyset$ and so $(x,y)\not\in{\rm DC1}(X,f)$. Thus,
\[
{\rm DC1}(X\times[0,1],F)=\emptyset.
\]

We have $F(0^\infty,t)=(0^\infty,t)$, i.e., $(0^\infty,t)$ is a fixed point for $F$ for all $t\in[0,1]$. This particularly implies that any pair of distinct points in $\{0^\infty\}\times[0,1]$ is a distal pair for $F$. We can show that $CR(F)$ consists of a single chain component
\[
CR(f)\times[0,1]=X_\infty\times[0,1]
\]
for $F$. This implies
\[
\mathcal{C}^s(F)=\{X_\infty\times[0,1]\}.
\]
Note that $X_\infty\times[0,1]$ contains $\{0^\infty\}\times[0,1]$ and so many distal pairs for $F$. 
\end{ex}

Then, the next example gives a continuous map $f\colon X\to X$ with the following properties:
\begin{itemize}
\item $f$ has the shadowing property,
\item $f$ exhibits $gu{\rm C}$,
\item $f$ has the limit-shadowing property, but its restriction to $CR(f)$ does not have the limit-shadowing property,   
\item $f$ exhibits $du{\rm DC2}$ and $d{\rm DC1}$ but does not exhibit $g{\rm DC2}$.
\end{itemize}
Note that the third property gives an answer to a question in \cite{K1}.

\begin{ex}
\normalfont
Let $\sigma\colon[-1,1]^\mathbb{N}\to[-1,1]^\mathbb{N}$ be the shift map, and let $d$ be the metric on $[-1,1]^\mathbb{N}$ defined by
\[
d(x,y)=\sup_{i\ge1}2^{-i}|x_i-y_i|
\]
for all $x=(x_i)_{i\ge1},y=(y_i)_{i\ge1}\in[-1,1]^\mathbb{N}$. Let $s=(s_k)_{k\ge 1}$ be a sequence of numbers with $0<s_1<s_2<\cdots$ and $\lim_{k\to\infty}s_k=1$. Put
\[
S=\{-1,1\}\cup\{-s_k\colon k\ge1\}\cup\{s_k\colon k\ge1\},
\]
a closed subset of $[-1,1]$.

We define a closed $\sigma$-invariant subset $X$ of $S^\mathbb{N}$ by for any $x=(x_i)_{i\ge1}\in S^\mathbb{N}$, $x\in X$ if and only if the following properties hold:
\begin{itemize}
\item[(1)] $|x_1|\le|x_2|\le\cdots$,
\item[(2)] For any $i\ge1$ and $k\ge1$, if $x_i=s_k$, then $x_{i+j}=-s_k$ for every $1\le j\le k$,
\item[(3)] For every $i\ge1$, if $x_i=1$, then $x_{i+j}=-1$ for all $j\ge1$.
\end{itemize}

Let $f=\sigma|_X\colon X\to X$, let $X_k=X\cap\{-s_k,s_k\}^\mathbb{N}$ for each $k\ge1$, and let
\[
X_\infty=X\cap\{-1,1\}^\mathbb{N}=\{(-1)^\infty,1(-1)^\infty\}\cup\{(-1)^m1(-1)^\infty\colon m\ge1\}.
\]
Note that $X_k$, $k\ge1$, are mixing subshifts of finite type. Put
\[
S_k=\{x=(x_i)_{i\ge1}\in X\colon |x_1|\ge s_k\},
\]
a clopen $f$-invariant subset of $X$, for each $k\ge1$, and note that $S_1\supset S_2\supset\cdots$. For any $x=(x_i)_{i\ge1}\in X$, if $|x_a|<|x_{a+1}|$ for some $a\ge1$, then taking $k\ge1$ with $|x_a|<s_k\le|x_{a+1}|$, we obtain $x\notin S_k$ and $f^a(x)\in S_k$. This implies $x\not\in CR(f)$ and so
\[
CR(f)=\{x=(x_i)_{i\ge1}\in X\colon|x_1|=|x_2|=\cdots\}=X_\infty\cup\bigcup_{k\ge1}X_k.
\]
We see that $\mathcal{C}(f)=\{X_\infty\}\cup\{X_k\colon k\ge1\}$ and $\mathcal{C}^s(f)=\{X_\infty\}$.

For each $k\ge1$, let
\[
Y_k=X\cap\{-s_1,-s_2,\dots,-s_k,s_1,s_2,\dots,s_k\}^\mathbb{N},
\]
a closed $f$-invariant subset of $X$. It is obvious that $Y_1\subset Y_2\subset\cdots$. Fix a sequence $(\delta_k)_{k\ge1}$ of numbers with $0<\delta_1>\delta_2>\cdots$ and $\lim_{k\to\infty}\delta_k=0$. Given any $x=(x_i)_{i\ge1}\in X$ and $k\ge1$, let
\begin{equation*}
y_i=
\begin{cases}
x_i&\text{if $|x_i|\le s_k$}\\
\frac{x_i}{|x_i|}\cdot s_k&\text{if $|x_i|>s_k$}
\end{cases}
\end{equation*}
for all $i\ge1$. Note that
\[
y=(y_i)_{i\ge1}\in\{-s_1,-s_2,\dots,-s_k,s_1,s_2,\dots,s_k\}^\mathbb{N}.
\]
For every $i\ge1$, if $|x_i|\le|x_{i+1}|\le s_k$, then $|y_i|=|x_i|\le|x_{i+1}|=|y_{i+1}|$. If $|x_i|\le s_k<|x_{i+1}|$, then $|y_i|=|x_i|\le s_k=|y_{i+1}|$. If $s_k<|x_i|\le|x_{i+1}|$, then $|y_i|=|y_{i+1}|=s_k$. It follows that $|y_1|\le|y_2|\le\cdots$. For every $i\ge1$, if $y_i=s_l$ and $1\le l< k$, then $x_i=s_l$ and so
\[
x_{i+1}x_{i+2}\cdots x_{i+l}=(-s_l)^l=y_{i+1}y_{i+2}\cdots y_{i+l}.
\]
If $y_i=s_k$ and $|x_i|\le s_k$, then $x_i=s_k$ and so
\[
x_{i+1}x_{i+2}\cdots x_{i+k}=(-s_k)^k=y_{i+1}y_{i+2}\cdots y_{i+k}.
\]
If $y_i=s_k$ and $|x_i|>s_k$, then $x_i\in\{1\}\cup\{s_{k+1},s_{k+2}\cdots\}$ and so 
\[
x_{i+1}x_{i+2}\cdots x_{i+k}=(-x_i)^k
\]
implying $y_{i+1}y_{i+2}\cdots y_{i+k}=(-s_k)^k$. It follows that $y\in Y_k$. By the definition of $y$, we easily see that if $s_k$ is sufficiently close to $1$, then $d(x,y)\le\delta_k$ for all $x\in X$, i.e., $X$ is contained in the $\delta_k$-neighborhood of $Y_k$.

We shall show that a choice of $s=(s_k)_{k\ge1}$ ensures the shadowing property of $f$. Fix a sequence $(\epsilon_k)_{k\ge1}$ of numbers with $0<\epsilon_1>\epsilon_2>\cdots$ and $\lim_{k\to\infty}\epsilon_k=0$. Fix any $0<s_0<1$ and assume that $s_{k-1}$, $k\ge1$, is given. Since $Y_k$ is a subshift of finite type of order $k+1$, 
\[
f|_{Y_k}\colon Y_k\to Y_k
\]
has the shadowing property, so there is $\delta'_k>0$ (independent of $s_k\in(s_{k-1},1)$) for which every $\delta'_k$-pseudo orbit $(x_i)_{i\ge0}$ of $f|_{Y_k}$ is $\epsilon_k/2$-shadowed by some $x\in Y_k$. For any $0<\delta_k<\epsilon_k/2$, if $s_k\in(s_{k-1},1)$ is sufficiently close to $1$, then as shown above, $X$ is contained in the $\delta_k$-neighborhood of $Y_k$. Then, for every $\delta_k$-pseudo orbit $(y_i)_{i\ge0}$ of $f$, we have
\[
d(x_i,y_i)=d(y_i,Y_k)\le\delta_k
\]
for all $i\ge0$ for some $x_i\in Y_k$. Since
\[
d(f(x_i),x_{i+1})\le d(f(x_i),f(y_i))+d(f(y_i),y_{i+1})+d(y_{i+1},x_{i+1})
\]
for every $i\ge0$, if $\delta_k$ is small enough, then $(x_i)_{i\ge0}$ is a $\delta'_k$-pseudo orbit of $Y_k$, $\epsilon_k/2$-shadowed by some $x\in Y_k$. This implies
\[
d(f^i(x),y_i)\le d(f^i(x),x_i)+d(x_i,y_i)\le\epsilon_k/2+\epsilon_k/2=\epsilon_k
\]
for all $i\ge0$, i.e., $(y_i)_{i\ge0}$ is $\epsilon_k$-shadowed by $x$. By defining $s=(s_k)_{k\ge1}$ in this way, we see that $f$ has the shadowing property.
Since $\mathcal{C}^s(f)=\{X_\infty\}$, $f|_{X_\infty}$ is chain mixing, and $X_\infty$ is not a singleton, by Lemma 3.2 and Theorem 1.1, $f$ exhibits $gu{\rm C}$.

Next, we shall show that $f$ has the limit-shadowing property. Let $\xi=(x_i)_{i\ge0}$ be a limit-pseudo orbit of $f$. Then, we have
\[
\lim_{i\to\infty}d(x_i,C)=0
\]
for some $C\in\mathcal{C}(f)$, and so by taking $y_i\in C$, $i\ge0$, with $d(x_i,y_i)=d(x_i,C)$, we obtain a limit-pseudo orbit $\xi'=(y_i)_{i\ge0}$ of $f|_C$ with
\[
\lim_{i\to\infty}d(x_i,y_i)=0.
\]
If $C=X_k$ for some $k\ge1$, then since $X_k$ is a subshift of finite type, and so $f|_{X_k}$ has the limit-shadowing property, there is $x\in X_k$ with $\lim_{i\to\infty}d(f^i(x),y_i)=0$. This implies $\lim_{i\to\infty}d(f^i(x),x_i)=0$, i.e., $\xi$ is limit-shadowed by $x$.  If $C=X_\infty$, then since the shift map
\[
g\colon\{-1,1\}^\mathbb{N}\to\{-1,1\}^\mathbb{N}
\]
has the limit-shadowing property, there is $z=(z_i)_{i\ge1}\in\{-1,1\}^\mathbb{N}$ such that
\[
\lim_{i\to\infty}d(g^i(z),y_i)=0.
\]
Since $\lim_{i\to\infty}d(g^i(z),X_\infty)=0$, putting
\[
\{i\ge1\colon z_i=1\}=\{i_j\colon j\ge1\},
\]
where $i_1<i_2<\cdots$, we have $i_{j+1}-i_j\to\infty$ as $j\to\infty$. Then, we see that there is $x\in X$ such that
\[
\lim_{i\to\infty}d(f^i(x),g^i(z))=0.
\]
This implies $\lim_{i\to\infty}d(f^i(x),y_i)=0$ and so $\lim_{i\to\infty}d(f^i(x),x_i)=0$, i.e., $\xi$ is limit-shadowed by $x$. Since $\xi$ is arbitrary, we conclude that $f$ satisfies the limit-shadowing property.

On the other hand, if $f|_{CR(f)}\colon CR(f)\to CR(f)$ has the limit-shadowing property, then for every $C\in\mathcal{C}(f)$, $f|_C$ should have the limit-shadowing property. However, since $f|_{X_\infty}$ does not have the limit-shadowing property, this is not the case. In other words, we find that $f|_{CR(f)}$ does not have the limit-shadowing property. 

Let us show that $f$ exhibits $du{\rm DC2}$ and $d{\rm DC1}$ but does not exhibit $g{\rm DC2}$. Take $0<e<s_1$ and note that $d(x,y)\ge e$ holds for any $k\ge1$ and $x=(x_i)_{i\ge1}$, $y=(y_i)_{i\ge1}\in X_k$ with $x_1\ne y_1$. Then, we see that
\[
{\rm DC2}_e(X_k,f|_{X_k})\ne\emptyset
\]
for every $k\ge1$. Also, we see that for any $1\le k<l$, $x_k\in Y_k$, $y_l\in X_l$, and $\epsilon>0$, there is $x\in X$ such that $d(x_k,x)\le\epsilon$ and $\lim_{i\to\infty}d(f^i(x),f^i(y_l))=0$. These properties imply that for any $0<\delta<e$ and $k\ge1$,
\[
Y_k\times Y_k\subset\overline{{\rm DC2}_\delta(X,f)}.
\]
On the other hand, as shown above, we have $Y_1\subset Y_2\subset\cdots$ and
\[
X=\overline{\bigcup_{k\ge1}Y_k}.
\]
It follows that for any $0<\delta<e$,
\[
X\times X\subset\overline{{\rm DC2}_\delta(X,f)},
\]
implying that $f$ exhibits $du{\rm DC2}$.

For any $k\ge1$, note that $\mathcal{C}^s(f|_{Y_k})=\{X_k\}$, $f|_{X_k}$ is mixing, and $X_k$ contains a distal pair for $f|_{X_k}$. Since $f|_{Y_k}$ has the shadowing property, by Lemma 3.2 and Theorem 1.2, $f|_{Y_k}$ exhibits $gu{\rm DC1}$, so especially, we have
\[
Y_k\times Y_k\subset\overline{{\rm DC1}(Y_k,f|_{Y_k})}\subset\overline{{\rm DC1}(X,f)}.
\]
It follows that
\[
X\times X\subset\overline{{\rm DC1}(X,f)},
\]
i.e., $f$ exhibits $d{\rm DC1}$.

If $f$ exhibits $g{\rm DC2}$, ${\rm DC2}(X,f)$ is a residual subset of $X^2$. Since $f$ has the shadowing property and satisfies $\mathcal{C}^s(f)=\{X_\infty\}$, as in the proof of Lemma 3.5, $W^s(X_\infty,f)\times W^s(X_\infty,f)$ is a dense $G_\delta$-subset of $X^2$, so
\[
{\rm DC2}(X,f)\cap[W^s(X_\infty,f)\times W^s(X_\infty,f)]
\]
is a residual subset of $X^2$. Take any
\[
(x,y)\in{\rm DC2}(X,f)\cap[W^s(X_\infty,f)\times W^s(X_\infty,f)].
\]
Then, taking $x_i,y_i\in X_\infty$, $i\ge0$, with $d(f^i(x),x_i)=d(f^i(x),X_\infty)$ and $d(f^i(y),y_i)=d(f^i(y),X_\infty)$, we obtain limit-pseudo orbits $\xi=(x_i)_{i\ge0}$ and $\xi'=(y_i)_{i\ge0}$ of the shift map
\[
g\colon\{-1,1\}^\mathbb{N}\to\{-1,1\}^\mathbb{N}.
\]
Since $g$ has the limit-shadowing property, $\xi$ and $\xi'$ are limit-shadowed by $z$ and $w$, respectively. By the choice of $\xi$ and $\xi'$, we obtain
\[
(z,w)\in{\rm DC2}(\{-1,1\}^\mathbb{N},g)\cap[W^s(X_\infty,g)\times W^s(X_\infty,g)],
\]
which contradicts Lemma 4.1. Thus, $f$ does not exhibit $g{\rm DC2}$.   
\end{ex}

The next example gives a continuous map with the shadowing property exhibiting $gu{\rm DC1}$. Note that its unique chain stable chain component is very simple.

\begin{ex}
\normalfont
Let $\sigma\colon[-1,1]^\mathbb{N}\to[-1,1]^\mathbb{N}$, $d$, $s=(s_k)_{k\ge1}$, and $S$ as before.  We define a closed $\sigma$-invariant subset $X$ of $S^\mathbb{N}$ by for any $x=(x_i)_{i\ge1}\in S^\mathbb{N}$, $x\in X$ if and only if the following properties hold:
\begin{itemize}
\item[(1)] $|x_1|\le|x_2|\le\cdots$,
\item[(2)] For any $i\ge1$ and $k\ge1$, if $x_i<0<x_{i+1}=s_k$, then $x_{i+j}=s_k$ for every $1\le j\le k$,
\item[(3)] For every $i\ge1$, if $x_i<0<x_{i+1}=1$, then $x_{i+j}=1$ for all $j\ge1$,
\item[(4)] For any $i\ge1$ and $k\ge1$, if $-s_k=x_{i+1}<0<x_i$, then $x_{i+j}=-s_k$ for every $1\le j\le k$,
\item[(5)] For every $i\ge1$, if $-1=x_{i+1}<0<x_i$, then $x_{i+j}=-1$ for all $j\ge1$.
\end{itemize}

Let $f=\sigma|_X\colon X\to X$, let $X_k=X\cap\{-s_k,s_k\}^\mathbb{N}$ for each $k\ge1$, and let
\[
X_\infty=X\cap\{-1,1\}^\mathbb{N}=\{(-1)^\infty,1^\infty\}\cup\{(-1)^m1^\infty\colon m\ge1\}\cup\{1^m(-1)^\infty\colon m\ge1\}.
\]
Note that $X_k$, $k\ge1$, are mixing subshifts of finite type. Similarly as in Example 4.3, we see that
\[
CR(f)=\{x=(x_i)_{i\ge1}\in X\colon|x_1|=|x_2|=\cdots\}=X_\infty\cup\bigcup_{k\ge1}X_k,
\]
$\mathcal{C}(f)=\{X_\infty\}\cup\{X_k\colon k\ge1\}$, and $\mathcal{C}^s(f)=\{X_\infty\}$.

Let
\[
Y_k=X\cap\{-s_1,-s_2,\dots,-s_k,s_1,s_2,\dots,s_k\}^\mathbb{N},
\]
$k\ge1$, a closed $f$-invariant subset of $X$, and note that $Y_1\subset Y_2\subset\cdots$. Fix a sequence $(\delta_k)_{k\ge1}$ of numbers with $0<\delta_1>\delta_2>\cdots$ and $\lim_{k\to\infty}\delta_k=0$. Given any $x=(x_i)_{i\ge1}\in X$ and $k\ge1$, let
\begin{equation*}
y_i=
\begin{cases}
x_i&\text{if $|x_i|\le s_k$}\\
\frac{x_i}{|x_i|}\cdot s_k&\text{if $|x_i|>s_k$}
\end{cases}
\end{equation*}
for all $i\ge1$. Note that
\[
y=(y_i)_{i\ge1}\in\{-s_1,-s_2,\dots,-s_k,s_1,s_2,\dots,s_k\}^\mathbb{N}.
\]
Similarly as in Example 4.3, we see that $|y_1|\le|y_2|\le\cdots$. For every $i\ge1$, if $y_i<0<y_{i+1}=s_l$ and $1\le l<k$, then $x_i<0<x_{i+1}=s_l$ and so
\[
x_{i+1}x_{i+2}\cdots x_{i+l}=(s_l)^l=y_{i+1}y_{i+2}\cdots y_{i+l}.
\]
If $y_i<0<y_{i+1}=s_k$ and $|x_{i+1}|\le s_k$, then $x_i<0<x_{i+1}=s_k$ and so
\[
x_{i+1}x_{i+2}\cdots x_{i+k}=(s_k)^k=y_{i+1}y_{i+2}\cdots y_{i+k}.
\]
If $y_i<0<y_{i+1}=s_k$ and $|x_{i+1}|>s_k$, then $x_i<0<x_{i+1}$ and $x_{i+1}\in\{1\}\cup\{s_{k+1},s_{k+2},\dots\}$, so
\[
x_{i+1}x_{i+2}\cdots x_{i+k}=(x_{i+1})^k,
\]
implying $y_{i+1}y_{i+2}\cdots y_{i+k}=(s_k)^k$. If $-s_l=y_{i+1}<0<y_i$ and $1\le l<k$, then $-s_l=x_{i+1}<0<x_i$ and so
\[
x_{i+1}x_{i+2}\cdots x_{i+l}=(-s_l)^l=y_{i+1}y_{i+2}\cdots y_{i+l}.
\]
If $-s_k=y_{i+1}<0<y_i$ and $|x_{i+1}|\le s_k$, then $-s_k=x_{i+1}<0<x_i$ and so
\[
x_{i+1}x_{i+2}\cdots x_{i+k}=(-s_k)^k=y_{i+1}y_{i+2}\cdots y_{i+k}.
\]
If $-s_k=y_{i+1}<0<y_i$ and $|x_{i+1}|>s_k$, then $x_{i+1}<0<x_i$ and $x_{i+1}\in\{-1\}\cup\{-s_{k+1},-s_{k+2},\dots\}$, so
\[
x_{i+1}x_{i+2}\cdots x_{i+k}=(x_{i+1})^k,
\]
implying $y_{i+1}y_{i+2}\cdots y_{i+k}=(-s_k)^k$. It follows that $y\in Y_k$. By the definition of $y$, we easily see that if $s_k$ is sufficiently close to $1$, then $d(x,y)\le\delta_k$ for all $x\in X$, i.e., $X$ is contained in the $\delta_k$-neighborhood of $Y_k$.

Since $Y_k$, $k\ge1$, is a subshift of finite type of order $k+1$, $f|_{Y_k}$ has the shadowing property. By a similar argument as in Example 4.3, we can show that a choice of $s=(s_k)_{k\ge1}$ ensures the shadowing property of $f$. Since $\mathcal{C}^s(f)=\{X_\infty\}$, $f|_{X_\infty}$ is chain mixing, and $X_\infty$ contains a distal pair $((-1)^\infty,1^\infty)$ for $f|_{X_\infty}$, by Lemma 3.2 and Theorem 1.2, $f$ exhibits $gu{\rm DC1}$.
\end{ex}

The final example shows that $d{\rm DC1}$ does not always imply $g{\rm C}$ even for continuous maps with the shadowing property.

\begin{ex}
\normalfont
Let $\sigma\colon[-1,1]^\mathbb{N}\to[-1,1]^\mathbb{N}$ and $d$ as before. Let $s=(s_k)_{k\ge 1}$ be a sequence of numbers with $1>s_1>s_2>\cdots$ and $\lim_{k\to\infty}s_k=0$. Put
\[
S=\{0\}\cup\{-s_k\colon k\ge1\}\cup\{s_k\colon k\ge1\},
\]
a closed subset of $[-1,1]$.

We define a closed $\sigma$-invariant subset $X$ of $S^\mathbb{N}$ by
\[
X=\{x=(x_i)_{i\ge1}\in S^\mathbb{N}\colon|x_1|\ge|x_2|\ge\cdots\}.
\]
Let $f=\sigma|_X\colon X\to X$, let $X_k=\{-s_k,s_k\}^\mathbb{N}$ for each $k\ge1$, and let $X_0=\{0^\infty\}$.
Note that $X_k$, $k\ge1$, are mixing subshifts of finite type. Put
\[
S_k=\{x=(x_i)_{i\ge1}\in X\colon |x_1|\le s_k\},
\]
$k\ge1$, a clopen $f$-invariant subset of $X$, and note that $S_1\supset S_2\supset\cdots$. For any $x=(x_i)_{i\ge1}\in X$, if $|x_a|>|x_{a+1}|$ for some $a\ge1$, then taking $k\ge1$ with $|x_a|>s_k\ge|x_{a+1}|$, we obtain $x\notin S_k$ and $f^a(x)\in S_k$. This implies $x\not\in CR(f)$ and so
\[
CR(f)=\{x=(x_i)_{i\ge1}\in X\colon|x_1|=|x_2|=\cdots\}=X_0\cup\bigcup_{k\ge1}X_k.
\]
We see that $\mathcal{C}(f)=\{X_0\}\cup\{X_k\colon k\ge1\}$ and $\mathcal{C}^s(f)=\{X_0\}$.

For each $k\ge1$, let
\[
Y_k=X\cap(\{0\}\cup\{-s_1,-s_2,\dots,-s_k,s_1,s_2,\dots,s_k\})^\mathbb{N},
\]
a closed $f$-invariant subset of $X$. It is obvious that $Y_1\subset Y_2\subset\cdots$. Fix a sequence $(\delta_k)_{k\ge1}$ of numbers with $0<\delta_1>\delta_2>\cdots$ and $\lim_{k\to\infty}\delta_k=0$. Given any $x=(x_i)_{i\ge1}\in X$ and $k\ge1$, let
\begin{equation*}
y_i=
\begin{cases}
x_i&\text{if $|x_i|\ge s_k$}\\
0&\text{if $|x_i|<s_k$}\\
\end{cases}
\end{equation*}
for all $i\ge1$. Note that
\[
y=(y_i)_{i\ge1}\in(\{0\}\cup\{-s_1,-s_2,\dots,-s_k,s_1,s_2,\dots,s_k\})^\mathbb{N}.
\]
For every $i\ge1$, if $|x_i|\ge|x_{i+1}|\ge s_k$, then $|y_i|=|x_i|\ge|x_{i+1}|=|y_{i+1}|$. If $|x_i|\ge s_k>|x_{i+1}|$, then $|y_i|=|x_i|\ge s_k>0=|y_{i+1}|$. If $s_k>|x_i|$, then $|y_i|=|y_{i+1}|=0$. It follows that $|y_1|\ge|y_2|\ge\cdots$ and so $y\in Y_k$. By the definition of $y$, we easily see that if $s_{k+1}$ is sufficiently close to $0$, then $d(x,y)\le\delta_k$ for all $x\in X$, i.e., $X$ is contained in the $\delta_k$-neighborhood of $Y_k$.

We shall show that a choice of $s=(s_k)_{k\ge1}$ ensures the shadowing property of $f$. Fix a sequence $(\epsilon_k)_{k\ge1}$ of numbers with $0<\epsilon_1>\epsilon_2>\cdots$ and $\lim_{k\to\infty}\epsilon_k=0$. Fix any $0<s_1<1$ and assume that $s_k$, $k\ge1$, is given. Since $Y_k$ is a subshift of finite type of order $2$, 
\[
f|_{Y_k}\colon Y_k\to Y_k
\]
has the shadowing property, so there is $\delta'_k>0$ for which every $\delta'_k$-pseudo orbit $(x_i)_{i\ge0}$ of $f|_{Y_k}$ is $\epsilon_k/2$-shadowed by some $x\in Y_k$. For any $0<\delta_k<\epsilon_k/2$, if $s_{k+1}\in(0,s_k)$ is sufficiently close to $0$, then as shown above, $X$ is contained in the $\delta_k$-neighborhood of $Y_k$. For every $\delta_k$-pseudo orbit $(y_i)_{i\ge0}$ of $f$, we have $d(x_i,y_i)=d(y_i,Y_k)\le\delta_k$ for all $i\ge0$ for some $x_i\in Y_k$. Then, a similar argument as in Example 4.3 shows that $(y_i)_{i\ge0}$ is $\epsilon_k$-shadowed by some $x\in Y_k$. By defining $(s_k)_{k\ge1}$ in this way, we see that $f$ has the shadowing property.

Putting
\[
Z_k=X\cap\{-s_1,-s_2,\dots,-s_k,s_1,s_2,\dots,s_k\}^\mathbb{N},
\]
$k\ge1$, a closed $f$-invariant subset of $X$, we see that $Z_1\subset Z_2\subset\cdots$ and
\[
X=\overline{\bigcup_{k\ge1}Z_k}.
\]
For any $k\ge1$, since $Z_k$ is a subshift of finite type of order $2$, $f|_{Z_k}$ has the shadowing property. Then, for every $k\ge1$, since $\mathcal{C}^s(f|_{Z_k})=\{X_k\}$, $f|_{X_k}$ is mixing, and $X_k$ contains a distal pair $((-s_k)^\infty,(s_k)^\infty)$ for $f|_{X_k}$, by Lemma 3.2 and Theorem 1.2, $f|_{Z_k}$ exhibits $gu{\rm DC1}$; therefore,
\[
Z_k\times Z_k\subset\overline{{\rm DC1}(Z_k,f|_{Z_k})}\subset\overline{{\rm DC1}(X,f)}.
\]
It follows that
\[
X\times X\subset\overline{{\rm DC1}(X,f)},
\]
i.e., $f$ exhibits $d{\rm DC1}$. However, since the unique $X_0\in\mathcal{C}^s(f)$ is a singleton, by Theorem 1.1, $f$ does not exhibit $g{\rm C}$.
\end{ex}

\section*{Acknowledgements}

This work was supported by JSPS KAKENHI Grant Number JP20J01143.

\end{document}